\documentclass[11pt]{article}

\usepackage[margin=1in]{geometry}
\usepackage{amsmath,amssymb,amsthm,mathtools}
\usepackage[T1]{fontenc}
\usepackage[utf8]{inputenc}
\usepackage{lmodern}
\usepackage{hyperref}
\usepackage{xcolor}
\usepackage{bbm}
\usepackage{bm}
\usepackage{upgreek}

\hypersetup{hidelinks}
\theoremstyle{plain}
\newtheorem{theorem}{Theorem}[section]
\newtheorem{lemma}[theorem]{Lemma}
\newtheorem{corollary}[theorem]{Corollary}

\theoremstyle{definition}
\newtheorem{definition}[theorem]{Definition}
\newtheorem{remark}[theorem]{Remark}

\numberwithin{equation}{section}

\newcommand{\R}{\mathbb{R}}

\newcommand{\E}{\mathbb{E}}
\newcommand{\MVME}[1]{\if\relax\detokenize{#1}\relax\mathrm{MVME}\else{}_{#1}\mkern1.5mu\mathrm{MVME}\fi}
\newcommand{\MME}[1]{\if\relax\detokenize{#1}\relax\mathrm{MME}^{*}\else{}_{#1}\mkern1.5mu\mathrm{MME}^{*}\fi}
\newcommand{\MVPH}[1]{\if\relax\detokenize{#1}\relax\mathrm{MVPH}\else{}_{#1}\mkern1.5mu\mathrm{MVPH}\fi}
\newcommand{\MPH}[1]{\if\relax\detokenize{#1}\relax\mathrm{MPH}^{*}\else{}_{#1}\mkern1.5mu\mathrm{MPH}^{*}\fi}
\newcommand{\ME}{\mathrm{ME}}
\newcommand{\PH}{\mathrm{PH}}
\newcommand{\Tr}{\mathrm{Tr}}
\newcommand{\diag}{\mathrm{diag}}
\newcommand{\balpha}{\boldsymbol{\upalpha}}
\newcommand{\boldeta}{\boldsymbol{\upeta}}

\begin{document}

\title{Equivalence and Separation for Multivariate Matrix-Exponential and Phase-Type Distribution Classes}
\author{Oscar Peralta}
\date{}
\maketitle

\begin{abstract}
We resolve two questions left open by Bladt and Nielsen~\cite{bladtnielsen2010} concerning multivariate families of matrix-exponential and phase-type distributions. First, in the matrix-exponential case, the projection-defined class \(\MVME{}\) coincides with Kulkarni's algebraic class \(\MME{}\). Our proof combines a multivariate state-space realization theorem with elementary augmentations that put the realization into Kulkarni's form. Thus every proper rational multivariate Laplace transform has a finite-dimensional Kulkarni-type representation once Markovian sign constraints are removed. Second, in the phase-type setting, the inclusion \(\MPH{}\subsetneq\MVPH{}\) is strict from the trivariate case onward. The separation is obtained through a factorization condition for \(\MPH{}\) that appears not to have been previously identified in the PH literature. A Wishart trace distribution belongs to \(\MVPH{}\) but fails this condition, hence providing the required example outside \(\MPH{}\). The example also shows that projection-based multivariate phase-type laws may have density and support geometry that are absent from the usual univariate theory.
\end{abstract}

\noindent\textbf{Keywords:} Matrix-exponential distribution, phase-type distribution, rational multivariate Laplace transform, Markov reward process, Wishart distribution.
\section{Introduction}

Phase-type (PH) and matrix-exponential (ME) distributions on the positive half-line form two useful and well-established families in applied probability. In the phase-type case a non-negative random variable is defined as the absorption time of a finite-state continuous-time Markov chain; in the matrix-exponential case the Markovian constraint is relaxed to requiring a rational Laplace--Stieltjes transform. PH distributions have long been a standard matrix-analytic tool, particularly in queueing, risk, and reliability models~\cite{neuts1981,asmussen2003,bladtnielsen2017}. ME distributions may be regarded as algebraic generalizations of PH distributions, retaining rational-transform structure while dropping the underlying Markovian sign constraints~\cite{asmussen1997renewal,bladtneuts2003,peralta2023}. Both families are dense, in the sense of weak convergence, in the class of all distributions on $\R_{+}$. Their analytical tractability, including closure under convolutions and finite mixtures, together with explicit matrix formulas for renewal quantities and related transform calculations, is a main reason for their broad use in matrix-analytic modelling.

In several variables, however, the passage from PH and ME distributions on \(\R_{+}\) to distributions on \(\R_{+}^{n}\) is not canonical. Recent work has therefore developed tractable multivariate subclasses designed for particular modelling purposes, including affine-mixture constructions for matrix-exponential risks~\cite{cheung2022affine}, multivariate phase-type models for loss modelling~\cite{bladt2023loss}, dependent phase-type models in risk theory~\cite{bladt2019parisian,peralta2023ruin}, and common-shock constructions based on matrix distributions~\cite{bladtperaltayslas2026}. Those models provide explicit and estimable dependence structures. The present paper addresses a more structural question: whether the most natural projection-based definitions have genuine finite-state reward interpretations. Rather than proposing a new subclass, we compare two broad ways of extending the PH and ME frameworks to several dimensions.

The first is the reward-based construction introduced by Kulkarni~\cite{kulkarni1989}, as a generalization of~\cite{assaf1984}. It defines a random vector \(\bm{X}=(X_{1},X_{2},\ldots,X_{n})^{\top}\in\R_{+}^{n}\) from an underlying finite-state continuous-time Markov chain, where each component \(X_j\) accumulates non-negative reward at a state-dependent rate until absorption. With the full Markovian constraints on the generator and non-negativity of the reward matrix, this gives the class \(\MPH{}\) in \(n\) variates; dropping all sign constraints on both matrices gives the algebraic generalization \(\MME{}\) in \(n\) variates. The second is the projection-based construction formalized by Bladt and Nielsen~\cite{bladtnielsen2010}, where membership is determined by all positive one-dimensional projections. A vector \(\bm{X}\) belongs to \(\MVPH{}\) in \(n\) variates if \(\langle \mathbf{a},\bm{X}\rangle\) is univariate phase-type for every \(\mathbf{a}\in\R_{+}^{n}\setminus\{0\}\), and it belongs to \(\MVME{}\) in \(n\) variates if every such projection is matrix-exponential, equivalently, if the joint Laplace transform is a proper rational function of the transform variables \((s_1,\ldots,s_n)\).

The inclusions $\MPH{}\subseteq\MVPH{}$ and $\MME{}\subseteq\MVME{}$, for a fixed variate dimension, are immediate from the definitions (Remark~\ref{rem-trivial}). The central question is whether these inclusions express genuine equivalences or whether the projection-defined classes are strictly larger. The difference is substantive: a finite-state reward representation provides a pathwise stochastic construction and the associated matrix-analytic calculus, whereas a projection-defined law may be accessible only through its one-dimensional marginals. It was shown in~\cite{bladtnielsen2010} that some elements of $\MVME{}$ require a state size strictly larger than the minimal algebraic degree of their Laplace transform in any $\MME{}$ realization, but this does not by itself give a strict separation between the two classes.

We resolve both questions. For the matrix-exponential classes, we prove the equivalence between \(\MVME{}\) and \(\MME{}\) in \(n\) variates for all $n\ge 2$; the diagonal structure of Kulkarni's resolvent imposes no genuine restriction once positivity is dropped. The proof uses a realization theorem from linear systems theory~\cite{alpay2003,cheng2011realization}, which provides a multivariate analogue of the one-variable ME representation by expressing a rational transform through a finite-dimensional resolvent in which several variables enter linearly. The passage from this systems-theoretic form to Kulkarni's diagonal resolvent is not direct and requires additional state-space transformations. Thus every proper rational multivariate Laplace transform admits a Kulkarni-form realization. In particular, the projection-defined ME class is not a looser transform class: it is exactly the finite-dimensional algebraic reward class.

For the phase-type classes, we prove the separation \(\MPH{}\subsetneq\MVPH{}\) in \(n\) variates for all $n\ge 3$. The main intermediate result is a factorization criterion for Kulkarni-type Markovian representations. The Markovian structure imposes a concrete algebraic signature on the denominator of the joint transform: its highest-degree part cannot be arbitrary, but must split into elementary real linear factors with non-negative coefficients. This condition is easy to miss from the probabilistic definition, but it gives an effective criterion for excluding distributions from \(\MPH{}\).

A three-dimensional Wishart trace distribution satisfies the projection requirement defining \(\MVPH{}\), but its denominator fails this factorization condition. It therefore lies between the two classes, belonging to \(\MVPH{}\) but not to \(\MPH{}\). Appending independent exponentials extends the separation to all $n\ge 3$. Wishart matrices originate in multivariate statistics as matrix-valued analogues of gamma or chi-square variables~\cite{wishart1928}; here they are used only through their determinant-form Laplace transform. To the best of our knowledge, this link between Wishart trace functionals and the projection-based multivariate phase-type class has not previously been developed in the literature. The same example shows that a rational multivariate Laplace transform may correspond to a density with a genuine square-root singularity and a non-polyhedral support, features absent from the univariate theory.

These results clarify what is, and is not, lost when multivariate PH and ME laws are defined through their one-dimensional projections. In the ME case, no structure is lost: every projection-defined law admits a finite-dimensional algebraic reward representation. In the PH case, by contrast, the projection property alone does not guarantee the existence of an underlying finite-state Markov reward process. The separation thus marks a modelling distinction between having PH projections and possessing a finite-state pathwise construction. In matrix-analytic modelling, reward representations carry structure beyond the transform itself: simulation, conditioning, and computational tractability.

The paper is organized as follows. Section~\ref{sec-defs} collects the definitions and preliminary inclusions. Section~\ref{sec-mvme} proves the matrix-exponential equivalence between \(\MVME{}\) and \(\MME{}\) in \(n\) variates. Section~\ref{sec-separation} proves the phase-type separation, establishing the factorization criterion for \(\MPH{}\) representations and constructing an explicit counterexample from a Wishart trace distribution, with an extension to all $n\ge 3$. Section~\ref{sec-remarks} collects concluding remarks.

\section{Definitions and preliminaries}
\label{sec-defs}

We fix $n\ge 2$ throughout and work on $\R_{+}^{n}=[0,\infty)^{n}$. Vectors in $\R^{n}$ are regarded as column vectors unless explicitly stated otherwise. Deterministic vectors and matrices are written in upright boldface, as in $\mathbf{s}$, $\mathbf{x}$, $\mathbf{t}$, $\mathbf{T}$, and $\mathbf{K}$. Deterministic Greek vectors are written in upright bold Greek, for instance $\balpha$ and $\boldeta$. Random vectors and random matrices are written with capital bold italic letters, such as $\bm{X}$, $\bm{Y}$, and $\bm{Z}$. For $\mathbf{s}=(s_{1},\ldots,s_{n})^{\top}\in\R^{n}$ and \(\mathbf{x}=(x_{1},\ldots,x_{n})^{\top}\in\R^{n}\), write $\langle \mathbf{s},\mathbf{x}\rangle=\sum_{j=1}^{n}s_{j}x_{j}$.

For a deterministic vector $\mathbf{v}\in\R^{m}$, let $\Delta(\mathbf{v})=\diag(\mathbf{v})$ denote the $m\times m$ diagonal matrix with entries $v_{1},\ldots,v_m$. The symbol $\mathbf{1}_r$ denotes the all-ones column vector of size $r$ and we suppress the subscript when the dimension is clear. $\mathbf{I}_m$ denotes the $m\times m$ identity matrix, $\mathbf{0}_m$ the zero column vector in $\R^m$, and $\mathbf{0}_{r\times q}$ the $r\times q$ zero matrix. For $\mathbf{s}\in\R^{n}$ and $\mathbf{K}\in\R^{m\times n}$, let $(\mathbf{K}\mathbf{s})_{i}=\sum_{j=1}^{n}\mathbf{K}_{ij}s_{j}$ and write $\Delta(\mathbf{K}\mathbf{s})=\diag(\mathbf{K}\mathbf{s})$.

\begin{definition}[ME and PH]\label{def-univariate}
A non-negative random variable $Z$ belongs to the \emph{matrix-exponential class} $\ME$ if its Laplace--Stieltjes transform $\mathcal{L}_{Z}(s)=\E[e^{-sZ}]$ is a rational function of $s$. Concretely, there exist $m\ge 1$, a row vector $\balpha\in\R^{1\times m}$, a Hurwitz-stable matrix $\mathbf{T}\in\R^{m\times m}$, and a column vector $\mathbf{t}\in\R^{m\times 1}$ such that, with
\[
  p_{0}=1-\balpha(-\mathbf{T})^{-1}\mathbf{t}\ge 0,
\]
the transform is
\begin{equation}\label{eq-univariate-me}
  \mathcal{L}_{Z}(s) =p_{0}+\balpha(s\mathbf{I}_{m}-\mathbf{T})^{-1}\mathbf{t}.
\end{equation}
Equivalently, apart from a possible atom $p_0$ at zero, the absolutely continuous component has density
\[
  f_Z(x)=\balpha e^{x\mathbf{T}}\mathbf{t},\qquad x>0,
\]
whenever the right-hand side is non-negative and integrates to $1-p_0$.
Here Hurwitz-stable means that every eigenvalue of $\mathbf{T}$ has strictly negative real part; this ensures that the resolvent in~\eqref{eq-univariate-me} is well behaved on the right half-plane.

$Z$ is \emph{phase-type} ($\PH$) if additionally $\mathbf{T}$ is the subgenerator of a transient continuous-time Markov chain. $\mathbf{T}_{ii}<0$, $\mathbf{T}_{ij}\ge 0$ for $i\ne j$, the row sums satisfy $\mathbf{T}\mathbf{1}\le 0$ component-wise, $\balpha\ge 0$ with $\balpha\mathbf{1}\le 1$, and $\mathbf{t}=-\mathbf{T}\mathbf{1}\ge 0$. The defect $1-\balpha\mathbf{1}$ is the probability of immediate absorption, equivalently the atom of $Z$ at zero.
\end{definition}

We now describe the two multivariate approaches used throughout the paper. The first is the \emph{reward-based construction} of Kulkarni~\cite{kulkarni1989}, in which a finite transient continuous-time Markov chain accumulates state-dependent rewards until absorption. The second is the \emph{projection-based construction} of Bladt and Nielsen~\cite{bladtnielsen2010}, in which membership is tested through all positive linear projections.

\paragraph{Notation for variate dimension.}
Throughout the paper, the left subscript on a multivariate distribution class records the variate dimension, that is, the number of coordinates of the random vector. Thus
\[
  \MPH{n},\qquad \MME{n},\qquad \MVPH{n},\qquad \MVME{n}
\]
denote the corresponding $n$-variate classes. This convention is separate from the order of a particular matrix representation, which is denoted by $m$ when it is needed and is otherwise left implicit.

\begin{definition}[$\MPH{n}$ and $\MME{n}$]\label{def-kulkarni}
An \(m\)-state Markovian reward-based representation is specified by a triplet \((\balpha,\mathbf{T},\mathbf{K})\), where \(\balpha\) is the initial row vector, \(\mathbf{T}\) is the transient subgenerator, and \(\mathbf{K}\) is the reward matrix. In this Markovian case, \(\mathbf{t}\) and \(p_0\) are determined by \(\mathbf{t}=-\mathbf{T}\mathbf{1}\) and \(p_0=1-\balpha\mathbf{1}\). A random vector $\bm{X}\in\R_{+}^{n}$ belongs to $\MPH{n}$ if there exist $m\ge 1$, a sub-probability row vector $\balpha\in\R_{+}^{1\times m}$, a transient subgenerator $\mathbf{T}\in\R^{m\times m}$, and a non-negative reward matrix $\mathbf{K}\in\R_{+}^{m\times n}$ such that
\begin{equation}\label{eq-kulkarni}
  L_{\bm{X}}(\mathbf{s})=\E\bigl[e^{-\langle \mathbf{s},\bm{X}\rangle}\bigr] =p_{0}+\balpha\bigl(-\mathbf{T}+\Delta(\mathbf{K}\mathbf{s})\bigr)^{-1}\mathbf{t},
\end{equation}
where $\mathbf{t}=-\mathbf{T}\mathbf{1}$ and $p_{0}=1-\balpha\mathbf{1}\ge 0$. In probabilistic terms, the underlying continuous-time Markov chain starts according to $\balpha$ unless it is absorbed at time zero with probability $p_0$. If \(J_t\) denotes the transient state occupied at time \(t\), then the \(j\)-th coordinate is the accumulated reward
\[ 
  X_j=\int_0^\tau \mathbf{K}_{J_t j}\,\mathrm{d}t,
\]
where \(\tau\) is the absorption time. Thus, while the chain is in transient state \(i\), coordinate \(j\) accumulates reward continuously at rate \(\mathbf{K}_{ij}\ge0\). The vector $\bm{X}$ belongs to $\MME{n}$ if its transform admits a representation of the form~\eqref{eq-kulkarni} with \(\mathbf{T}\) Hurwitz-stable, but without the Markovian and sign constraints on \(\balpha\), \(\mathbf{T}\), \(\mathbf{K}\), or \(\mathbf{t}\), and subject only to \(L_{\bm X}\) being the Laplace transform of a genuine probability measure on \(\R_+^n\).
\end{definition}

\begin{remark}\label{rem-normalization}
The definition of $\MME{n}$ imposes no sign constraint on $\balpha$, $\mathbf{T}$, or $\mathbf{K}$, and in particular does not require the normalization $\mathbf{t}=-\mathbf{T}\mathbf{1}$, the condition $0\le\balpha\mathbf{1}\le 1$, or the non-negativity $\mathbf{K}\ge 0$. These are precisely the structural conditions that appear in the Markovian definition of $\MPH{n}$, but they place no genuine restriction on the algebraic class $\MME{n}$. The constructive proof of $\MVME{n}\subseteq\MME{n}$ given in Section~\ref{sec-mvme} does, however, produce representations that satisfy all three of them, yielding a subclass of $\MME{n}$ whose formal structure mirrors that of $\MPH{n}$. Such normalizations are useful beyond formal analogy: in more general reward systems with ME-like components, related constraints on the reward matrix, initial vector, and terminal column provide additional control over the stochastic interpretation and analytic tractability of the model; see, for example, \cite{bean2022rap}. The same remark applies in the univariate case: the $\ME$ class does not require $\mathbf{t}=-\mathbf{T}\mathbf{1}$ or $\balpha\mathbf{1}\in[0,1]$ as part of its definition, although equivalent normalized representations are available; see, for example, \cite{asmussen1997renewal}.
\end{remark}

The projection-based classes are defined by the behavior of the distribution under positive linear combinations.

\begin{definition}[$\MVME{n}$ and $\MVPH{n}$]\label{def-bn}
$\bm{X}\in\R_{+}^{n}$ belongs to $\MVME{n}$ if its joint Laplace transform $L_{\bm{X}}(\mathbf{s})$ is a proper rational function of $(s_{1},\ldots,s_{n})$. It is shown in~\cite{bladtnielsen2010} that this is equivalent to requiring $\langle \mathbf{a},\bm{X}\rangle\in\ME$ for every $\mathbf{a}\in\R_{+}^{n}\setminus\{0\}$. Further, we say that $\bm{X}$ belongs to $\MVPH{n}$ if $\langle \mathbf{a},\bm{X}\rangle\in\PH$ for every $\mathbf{a}\in\R_{+}^{n}\setminus\{0\}$.
\end{definition}

\begin{remark}\label{rem-trivial}
$\MPH{n}\subseteq\MVPH{n}$ and $\MME{n}\subseteq\MVME{n}$. For $\bm{X}\in\MPH{n}$ with parameters $(\balpha,\mathbf{T},\mathbf{K})$ and any $\mathbf{a}\in\R_{+}^{n}\setminus\{0\}$, the projected transform
\[
  L_{\bm{X}}(u\mathbf{a})=p_{0}+\balpha(u\Delta(\mathbf{K}\mathbf{a})-\mathbf{T})^{-1}\mathbf{t}
\]
is the transform of a univariate PH law. This is standard, including the treatment of zero reward rates; see~\cite{bladtnielsen2010} for details. The analogous inclusion \(\MME{n}\subseteq\MVME{n}\) follows because the same projection formula is rational in \(u\).
\end{remark}

The four classes, with the results proved below, are as follows:
\[
\begin{array}{c|c|c}
\text{class} & \text{defining feature} & \text{result in this paper} \\ \hline
\MME{n} & \text{algebraic reward representation} & =\MVME{n} \\
\MVME{n} & \text{all positive projections are ME} & =\MME{n} \\
\MPH{n} & \text{Markovian reward representation} &  \text{strictly smaller than }\MVPH{n}\text{ for }n\ge3 \\
\MVPH{n} & \text{all positive projections are PH} & \text{strictly larger than }\MPH{n}\text{ for }n\ge3.
\end{array}
\]

\section[Equivalence of MVME and MME]{Equivalence of $\MVME{n}$ and $\MME{n}$}
\label{sec-mvme}

The inclusion $\MME{n}\subseteq\MVME{n}$ is Remark~\ref{rem-trivial}. The non-trivial direction is $\MVME{n}\subseteq\MME{n}$. If $\bm{X}\in\MVME{n}$, then its Laplace transform can be written in reduced form as $L_{\bm{X}}(\mathbf{s})=P(\mathbf{s})/Q(\mathbf{s})$, where $P$ and $Q$ are polynomials with real coefficients, have no non-constant common factor, and satisfy $Q(0)\ne0$. The problem is to realize this arbitrary rational transform in Kulkarni's diagonal resolvent form.

The apparent restriction comes from the way the transform variables \(s_{1},\ldots,s_{n}\) enter a reward-based representation. They appear only through the diagonal term \(\Delta(\mathbf{K}\mathbf{s})\), so the denominator is produced by a determinant in which each state contributes one linear expression in \(\mathbf{s}\). This places Kulkarni's formula in a form that is not obviously comparable with a general multivariate rational denominator. The equivalence theorem shows that, once the Markovian sign constraints are removed, this diagonal structure does not restrict the resulting class. In other words, general rational Laplace transforms can be converted into Kulkarni form by passing through a finite-dimensional realization and then applying a series of state-space augmentations.

We first recall the version of the realization theorem from~\cite{alpay2003,cheng2011realization} needed below. We state it in the form used here, specialized to scalar rational functions with real coefficients in the same \(n\) variables as the variate dimension of the transform.

\begin{theorem}\label{thm-alpay}
Let $\widetilde{L}(\mathbf{s})$ be a rational function with real coefficients, analytic at the origin. Then there exist $N\ge 1$, a row vector $\boldeta\in\R^{1\times N}$, a column vector $\mathbf{b}\in\R^{N\times 1}$, and matrices $\mathbf{A}_{1},\ldots,\mathbf{A}_{n}\in\R^{N\times N}$ such that
\begin{equation}\label{eq-alpay}
  \widetilde{L}(\mathbf{s})=\boldeta\Bigl(\mathbf{I}_{N}-\sum_{j=1}^{n}s_{j}\mathbf{A}_{j}\Bigr)^{-1}\mathbf{b}.
\end{equation}
\end{theorem}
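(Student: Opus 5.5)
Write $\widetilde{L}=P/Q$ in reduced form with $Q(0)\neq 0$; after dividing through we may assume $Q(0)=1$. The plan is to realize polynomials first, then reciprocals of polynomials, and then combine the two by closure properties of the realizable class. Call a rational function \emph{realizable} if it admits a representation of the form~\eqref{eq-alpay}. Throughout, the standing assumption is that rational means rational in the $n$ variables $s_1,\ldots,s_n$ and analytic at the origin.

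\emph{Step 1: closure under sums and products.}
\begin{itemize}
\item For sums $f_1+f_2$ with realizations $(\boldeta_i,\mathbf{A}^{(i)}_j,\mathbf{b}_i)$, take block-diagonal matrices $\mathbf{A}_j=\diag(\mathbf{A}^{(1)}_j,\mathbf{A}^{(2)}_j)$, the concatenated row vector $(\boldeta_1,\boldeta_2)$, and the stacked column $(\mathbf{b}_1;\mathbf{b}_2)$.
\item For products, use the cascade construction. Set $\mathbf{M}_i(\mathbf{s})=\mathbf{I}-\sum_j s_j\mathbf{A}^{(i)}_j$ and consider the block upper-triangular pencil
\[
\mathbf{I}-\sum_j s_j\begin{pmatrix}\mathbf{A}^{(1)}_j & \mathbf{0}\\ \mathbf{0} & \mathbf{A}^{(2)}_j\end{pmatrix}-\begin{pmatrix}\mathbf{0} & -\mathbf{b}_1\boldeta_2\\ \mathbf{0}&\mathbf{0}\end{pmatrix}.
\]
The coupling term is constant, which the form~\eqref{eq-alpay} does not allow. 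To avoid this, build the product at the level of the state-space description $f=D+C(\mathbf{I}-Z(\mathbf{s})A)^{-1}ZB$, the standard Givone--Roesser/Fornasini--Marchesini descriptor form. Here $Z(\mathbf{s})=\diag(s_{j}\mathbf{I}_{N_j})$. The usual series connection formulas keep the product in this form.
\item Then convert back to~\eqref{eq-alpay}. Enlarge the state by one coordinate carrying the constant $D$. Then write $Z(\mathbf{s})A=\sum_j s_j \mathbf{E}_jA$, where $\mathbf{E}_j$ is the coordinate projection onto block $j$. With the output vector $\boldeta$ absorbing $C$ and $D$, the function takes exactly the form~\eqref{eq-alpay} with $\mathbf{A}_j=\mathbf{E}_jA$. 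This is the structured-noncommutative realization idea of Ball et al.\ and Alpay et al.
\end{itemize}

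\emph{Step 2: polynomials and reciprocals.}
\begin{itemize}
\item Monomials $s_j$ and constants are realizable directly in descriptor form. Hence every polynomial is realizable, by Step 1.
\item For $1/Q$ with $Q(0)=1$, write $Q=1-R$ with $R(0)=0$. Using a descriptor realization of $R$ with $D=0$, the feedback (inverse) formula gives $(1-R)^{-1}=1+C(\mathbf{I}-Z(A+BC))^{-1}ZB$, again of the required form. Equivalently, the inverse system is $A^{\times}=A+BC$.
\item Multiplying by the realization of $P$ yields the realization of $\widetilde{L}=P/Q$. Real coefficients are preserved at every step.
\end{itemize}

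\emph{Main obstacle.} The delicate point is keeping all $\mathbf{s}$-dependence linear and homogeneous, $\sum_j s_j\mathbf{A}_j$, with no constant block. The descriptor form with output vector $\boldeta$ and input vector $\mathbf{b}$ handles this.
\begin{itemize}
\item The feedthrough constant $D$ is absorbed by adjoining a state $e$ with $\mathbf{A}_je=0$. Then $(\mathbf{I}-\sum_j s_j\mathbf{A}_j)^{-1}e=e$, and $\boldeta e=D$.
\item One must check carefully that, after this augmentation, the block-triangular cascade and the feedback formulas still involve only products of the form $s_j\times$(constant matrix).
\end{itemize}
I would verify this via the Neumann series identity $(\mathbf{I}-ZA)^{-1}Z=\sum_k (ZA)^kZ$, which makes homogeneity in $\mathbf{s}$ evident.
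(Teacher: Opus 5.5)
Your route differs from the paper's. The paper quotes the Fornasini--Marchesini realization $d+\mathbf c(\mathbf I_\rho-\sum_j s_j\mathbf R_j)^{-1}\sum_j s_j\mathbf b_j$ from Cheng et al.\ and then adds one state. You instead build a Givone--Roesser realization $D+C(\mathbf I-Z(\mathbf s)A)^{-1}Z(\mathbf s)B$ yourself, from monomials, constants, parallel and series connections, and the feedback inverse. Those constructive steps are sound. The series connection you leave implicit is
\[
A=\begin{pmatrix}A_1 & B_1C_2\\ 0 & A_2\end{pmatrix},\quad B=\begin{pmatrix}B_1D_2\\ B_2\end{pmatrix},\quad C=(C_1,\ D_1C_2),\quad D=D_1D_2,\quad Z=\diag(Z_1,Z_2).
\]
Your inverse formula with $A^{\times}=A+BC$ checks out via $(\mathbf I-ZA^{\times})^{-1}-(\mathbf I-ZA)^{-1}=(\mathbf I-ZA)^{-1}ZBC(\mathbf I-ZA^{\times})^{-1}$. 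Your argument therefore has the merit of being self-contained, with an explicit (non-minimal) state dimension. The cost is length, whereas the paper's proof is short but rests entirely on the cited theorem.

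The gap is in the final conversion to the form \eqref{eq-alpay}. You take $\mathbf A_j=\mathbf E_jA$ on the old states, adjoin a state $e$ with $\mathbf A_je=0$, and let $\boldeta$ absorb $C$ and $D$. Then $B$ appears nowhere in the matrices $\mathbf A_j$, so the $\mathbf s$-dependent input $Z(\mathbf s)B$ would have to come from the constant column $\mathbf b=(\beta;\mathbf b')$. The realized function is then $D\beta+C(\mathbf I-Z(\mathbf s)A)^{-1}\mathbf b'$, which in general is not $D+C(\mathbf I-Z(\mathbf s)A)^{-1}Z(\mathbf s)B$. For example, your Step 2 realizes $f=s_1$ with $A=0$, $B=C=1$, $D=0$; then every $\mathbf A_j$ is zero and the result is constant. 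The fix is to let the adjoined state feed the input rather than be annihilated:
\[
\mathbf A_j=\begin{pmatrix}0&\mathbf 0\\ \mathbf E_jB&\mathbf E_jA\end{pmatrix},\qquad \mathbf b=\begin{pmatrix}1\\ \mathbf 0\end{pmatrix},\qquad \boldeta=(D,\ C).
\]
The block lower-triangular inverse then gives
\[
\Bigl(\mathbf I-\sum_j s_j\mathbf A_j\Bigr)^{-1}\mathbf b=\begin{pmatrix}1\\ (\mathbf I-Z(\mathbf s)A)^{-1}Z(\mathbf s)B\end{pmatrix}.
\]
This is exactly the paper's one-state augmentation, because a Givone--Roesser realization is a Fornasini--Marchesini one with $\mathbf R_j=\mathbf E_jA$ and $\mathbf b_j=\mathbf E_jB$. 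With this correction your proof is complete. The Neumann-series check in your last paragraph is then unnecessary, since homogeneity in $\mathbf s$ is automatic once all connections are done in Givone--Roesser form.
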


\begin{proof}
The multivariate realization theorem in \cite{cheng2011realization} gives a Fornasini--Marchesini realization of the form
\[
  \widetilde{L}(\mathbf{s})=d+\mathbf{c}\Bigl(\mathbf{I}_{\rho}-\sum_{j=1}^{n}s_{j}\mathbf{R}_{j}\Bigr)^{-1} \Bigl(\sum_{j=1}^{n}s_{j}\mathbf{b}_{j}\Bigr),
\]
where \(d\in\R\), \(\mathbf{c}\in\R^{1\times \rho}\), \(\mathbf{b}_{j}\in\R^{\rho\times 1}\), and \(\mathbf{R}_{j}\in\R^{\rho\times \rho}\). We convert this formula into the constant-input form~\eqref{eq-alpay} by adding one state. Set $N=\rho+1$ and define
\[
  \mathbf{A}_{j}=\begin{pmatrix}0&\mathbf{0}_{1\times \rho}\\ \mathbf{b}_{j}&\mathbf{R}_{j}\end{pmatrix}, \qquad \mathbf{b}=\begin{pmatrix}1\\\mathbf{0}_{\rho}\end{pmatrix}, \qquad \boldeta=\begin{pmatrix}d&\mathbf{c}\end{pmatrix}.
\]
Since
\[
  \mathbf{I}_{N}-\sum_{j=1}^{n}s_{j}\mathbf{A}_{j} =
  \begin{pmatrix}
    1 & \mathbf{0}_{1\times \rho}\\ -\sum_{j=1}^{n}s_{j}\mathbf{b}_{j} & \mathbf{I}_{\rho}-\sum_{j=1}^{n}s_{j}\mathbf{R}_{j}
  \end{pmatrix},
\]
the standard inverse formula for a block lower-triangular matrix \cite[Appendix~A.1]{bladtnielsen2017} gives
\[
  \Bigl(\mathbf{I}_{N}-\sum_{j=1}^{n}s_{j}\mathbf{A}_{j}\Bigr)^{-1}\mathbf{b} =
  \begin{pmatrix}
    1\\[2mm] \bigl(\mathbf{I}_{\rho}-\sum_{j=1}^{n}s_{j}\mathbf{R}_{j}\bigr)^{-1} \sum_{j=1}^{n}s_{j}\mathbf{b}_{j}
  \end{pmatrix}.
\]
Multiplying by $\boldeta$ yields
\[
  \boldeta\Bigl(\mathbf{I}_{N}-\sum_{j=1}^{n}s_{j}\mathbf{A}_{j}\Bigr)^{-1}\mathbf{b} = d+\mathbf{c}\Bigl(\mathbf{I}_{\rho}-\sum_{j=1}^{n}s_{j}\mathbf{R}_{j}\Bigr)^{-1} \Bigl(\sum_{j=1}^{n}s_{j}\mathbf{b}_{j}\Bigr) = \widetilde{L}(\mathbf{s}),
\]
which proves~\eqref{eq-alpay}.
\end{proof}

\begin{remark}\label{rem-real-realization}
Some multivariable realization theorems, including formulations such as~\cite{alpay2003}, are stated over the complex field. This causes no loss of generality here. If the rational function \(\widetilde L\) has real coefficients and is represented by complex matrices, then the standard realification of the state space, together with the corresponding realification of the input and output vectors, gives a real realization of the same scalar rational function, after a finite state-space augmentation; see \cite[Sec.~2]{futornyhornsergeichuk2008}. We shall therefore use real realizations without further comment.
\end{remark}

The next lemma records an elementary normalization of the closing column.

\begin{lemma}\label{lem-input-one}
Suppose
\[
  \widetilde{L}(\mathbf{s})=\boldeta\Bigl(\mathbf{I}_{N}-\sum_{j=1}^{n}s_j\mathbf{A}_j\Bigr)^{-1}\mathbf{b}
\]
is a non-zero realization of the form~\eqref{eq-alpay}. Then there is an equivalent realization of the same form, of the same dimension, for which the closing column is \(\mathbf{1}_N\).
\end{lemma}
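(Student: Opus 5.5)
Since the statement requires the same dimension, the plan is a similarity transformation of the state space. If \(\mathbf{S}\in\R^{N\times N}\) is invertible, then
\[
  \boldeta\Bigl(\mathbf{I}_N-\sum_j s_j\mathbf{A}_j\Bigr)^{-1}\mathbf{b}
  =(\boldeta\mathbf{S}^{-1})\Bigl(\mathbf{I}_N-\sum_j s_j\mathbf{S}\mathbf{A}_j\mathbf{S}^{-1}\Bigr)^{-1}(\mathbf{S}\mathbf{b}),
\]
because \(\mathbf{S}(\mathbf{I}_N-\sum_j s_j\mathbf{A}_j)^{-1}\mathbf{S}^{-1}=(\mathbf{I}_N-\sum_j s_j\mathbf{S}\mathbf{A}_j\mathbf{S}^{-1})^{-1}\). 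The new realization \((\boldeta\mathbf{S}^{-1},\mathbf{S}\mathbf{A}_j\mathbf{S}^{-1},\mathbf{S}\mathbf{b})\) is again of the form~\eqref{eq-alpay} and has dimension \(N\). It therefore suffices to find an invertible \(\mathbf{S}\) with \(\mathbf{S}\mathbf{b}=\mathbf{1}_N\).

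The first step uses non-zeroness. Since \(\widetilde L\not\equiv0\), we must have \(\mathbf{b}\ne\mathbf{0}_N\), since otherwise the right-hand side vanishes identically. Any two non-zero vectors in \(\R^N\) are related by an invertible linear map, because \(GL_N(\R)\) acts transitively on \(\R^N\setminus\{0\}\).

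To make this concrete, complete \(\mathbf{b}\) to a basis \(\mathbf{b},\mathbf{v}_2,\ldots,\mathbf{v}_N\) of \(\R^N\), and complete \(\mathbf{1}_N\) to a basis \(\mathbf{1}_N,\mathbf{w}_2,\ldots,\mathbf{w}_N\). Set \(\mathbf{S}=\mathbf{W}\mathbf{V}^{-1}\), where \(\mathbf{V}=(\mathbf{b}\ \mathbf{v}_2\cdots\mathbf{v}_N)\) and \(\mathbf{W}=(\mathbf{1}_N\ \mathbf{w}_2\cdots\mathbf{w}_N)\). Then \(\mathbf{S}\) is invertible and \(\mathbf{S}\mathbf{b}=\mathbf{W}\mathbf{e}_1=\mathbf{1}_N\). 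Alternatively, one can use an explicit Householder-type or rank-one update \(\mathbf{S}=\mathbf{I}_N+(\mathbf{1}_N-\mathbf{b})\mathbf{u}^{\top}\) with \(\mathbf{u}^\top\mathbf{b}=1\). This choice is invertible whenever \(1+\mathbf{u}^\top(\mathbf{1}_N-\mathbf{b})=\mathbf{u}^\top\mathbf{1}_N\ne0\), and such a \(\mathbf{u}\) can be chosen generically.

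There is no real obstacle. The only points to check are that \(\mathbf{b}\ne0\), which follows from the non-zero hypothesis, and that the similarity preserves both the form of the realization and its real coefficients, which it does.
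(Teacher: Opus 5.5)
Your proposal is correct and matches the paper's proof. Both observe that $\mathbf{b}\neq\mathbf{0}$ because $\widetilde L\not\equiv 0$, choose an invertible matrix sending $\mathbf{b}$ to $\mathbf{1}_N$, and conjugate the realization to $(\boldeta\mathbf{H}^{-1},\mathbf{H}\mathbf{A}_j\mathbf{H}^{-1},\mathbf{1}_N)$; your explicit constructions of that matrix (basis completion, or the rank-one update, which is invertible exactly when $\mathbf{u}^{\top}\mathbf{1}_N\neq 0$) simply fill in a detail the paper leaves implicit.
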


\begin{proof}
Since \(\widetilde L\not\equiv0\), we may assume \(\mathbf b\ne0\). Choose an invertible matrix \(\mathbf H\in\R^{N\times N}\) such that \(\mathbf H\mathbf b=\mathbf 1_N\). Define
\[
  \widetilde{\boldeta}=\boldeta\mathbf H^{-1},\qquad
  \widetilde{\mathbf A}_j=\mathbf H\mathbf A_j\mathbf H^{-1},\qquad
  \widetilde{\mathbf b}=\mathbf H\mathbf b=\mathbf 1_N.
\]
Then
\[
  \widetilde{\boldeta}
  \Bigl(\mathbf I_N-\sum_{j=1}^n s_j\widetilde{\mathbf A}_j\Bigr)^{-1}
  \widetilde{\mathbf b}
  =
  \boldeta
  \Bigl(\mathbf I_N-\sum_{j=1}^n s_j\mathbf A_j\Bigr)^{-1}
  \mathbf b,
\]
so the represented rational function is unchanged.
\end{proof}

After normalizing the closing column, two lemmas convert the realization~\eqref{eq-alpay} into Kulkarni's resolvent. The first introduces a block structure that moves the dependence on $\mathbf{s}$ onto a diagonal matrix. The second augments the system to make the transition matrix invertible and Hurwitz-stable. The normalization of the input column to \(\mathbf 1_N\) also simplifies the terminal-column computation.

\begin{lemma}\label{lem-diag}
Let $\boldeta(\mathbf{I}_{N}-\sum_{j=1}^{n}s_{j}\mathbf{A}_{j})^{-1}\mathbf{1}_N$ be as in Theorem~\ref{thm-alpay}, after applying Lemma~\ref{lem-input-one}. Set $q=nN$. Define the $q\times q$ block-diagonal matrix
\[
  \mathbf{S}(\mathbf{s})=\diag(s_{1}\mathbf{I}_{N},\;s_{2}\mathbf{I}_{N},\;\ldots,\;s_{n}\mathbf{I}_{N}),
\]
the $q\times q$ block matrix $\mathbf{U}$ whose every block row of $N$ rows equals $[\mathbf{A}_{1},\;\mathbf{A}_{2},\;\ldots,\;\mathbf{A}_{n}]$,
\[
  \mathbf{U}=\begin{pmatrix} \mathbf{A}_{1}&\mathbf{A}_{2}&\cdots&\mathbf{A}_{n}\\ \mathbf{A}_{1}&\mathbf{A}_{2}&\cdots&\mathbf{A}_{n}\\ \vdots&&\ddots&\vdots\\ \mathbf{A}_{1}&\mathbf{A}_{2}&\cdots&\mathbf{A}_{n}
  \end{pmatrix},
\]
the block column $\mathbf{E}=(\mathbf{I}_{N},\mathbf{I}_{N},\ldots,\mathbf{I}_{N})^{\top}\in\R^{q\times N}$, the row vector $\balpha_{0}=(\boldeta,\;\mathbf{0}_{1\times(q-N)})\in\R^{1\times q}$, and the column $\mathbf{1}_{q}=\mathbf{E}\mathbf{1}_{N}$. Then for every \(\mathbf{s}\) such that
\[
  \det\Bigl(\mathbf{I}_{N}-\sum_{j=1}^{n}s_{j}\mathbf{A}_{j}\Bigr)\ne0,
\]
or, equivalently, such that \(\mathbf{I}_{q}-\mathbf{U}\mathbf{S}(\mathbf{s})\) is invertible,
\[
  \balpha_{0}(\mathbf{I}_{q}-\mathbf{U}\mathbf{S}(\mathbf{s}))^{-1}\mathbf{1}_{q} =\boldeta\Bigl(\mathbf{I}_{N}-\sum_{j=1}^{n}s_{j}\mathbf{A}_{j}\Bigr)^{-1}\mathbf{1}_{N}.
\]
\end{lemma}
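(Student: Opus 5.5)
Writing \(\mathbf{M}(\mathbf{s})=\sum_{j=1}^{n}s_j\mathbf{A}_j\), the plan is to notice that \(\mathbf{U}=\mathbf{E}\mathbf{F}\) factors, where \(\mathbf{F}=[\mathbf{A}_1,\ldots,\mathbf{A}_n]\in\R^{N\times q}\). This gives \(\mathbf{U}\mathbf{S}(\mathbf{s})=\mathbf{E}\,\mathbf{F}\mathbf{S}(\mathbf{s})\). Because \(\mathbf{S}(\mathbf{s})\) is block diagonal,
\[
  \mathbf{F}\mathbf{S}(\mathbf{s})\mathbf{E}=\sum_{j=1}^{n}\mathbf{A}_j s_j\mathbf{I}_N=\mathbf{M}(\mathbf{s}).
\]
So \(\mathbf{I}_q-\mathbf{U}\mathbf{S}(\mathbf{s})\) is a rank-\(N\) perturbation of the identity, of the form \(\mathbf{I}_q-\mathbf{E}\mathbf{G}\) with \(\mathbf{G}=\mathbf{F}\mathbf{S}(\mathbf{s})\).

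\textbf{Step 1: invertibility.} I first verify the claimed equivalence of the two invertibility conditions. Sylvester's determinant identity gives
\[
  \det(\mathbf{I}_q-\mathbf{E}\mathbf{G})=\det(\mathbf{I}_N-\mathbf{G}\mathbf{E})=\det(\mathbf{I}_N-\mathbf{M}(\mathbf{s})),
\]
and the equivalence follows.

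\textbf{Step 2: push-through identity.} Next I use the identity
\[
  (\mathbf{I}_q-\mathbf{E}\mathbf{G})^{-1}\mathbf{E}=\mathbf{E}(\mathbf{I}_N-\mathbf{G}\mathbf{E})^{-1}.
\]
It follows from \((\mathbf{I}_q-\mathbf{E}\mathbf{G})\mathbf{E}=\mathbf{E}(\mathbf{I}_N-\mathbf{G}\mathbf{E})\) after multiplying on the left and right by the respective inverses, both of which exist by Step 1. Applying it to \(\mathbf{1}_q=\mathbf{E}\mathbf{1}_N\) gives
\[
  (\mathbf{I}_q-\mathbf{U}\mathbf{S}(\mathbf{s}))^{-1}\mathbf{1}_q=\mathbf{E}\bigl(\mathbf{I}_N-\mathbf{M}(\mathbf{s})\bigr)^{-1}\mathbf{1}_N.
\]

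\textbf{Step 3: conclusion.} Finally \(\balpha_0\mathbf{E}=\boldeta\), since \(\balpha_0\) is \(\boldeta\) in the first block and zero elsewhere, while the first block of \(\mathbf{E}\) is \(\mathbf{I}_N\). Multiplying the identity from Step 2 on the left by \(\balpha_0\) therefore yields the claim.

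The only delicate point is bookkeeping. One has to confirm that \(\mathbf{F}\mathbf{S}(\mathbf{s})\mathbf{E}\) really collapses to \(\mathbf{M}(\mathbf{s})\), which uses that every block of \(\mathbf{S}(\mathbf{s})\) is scalar. One also has to keep the order \(\mathbf{U}\mathbf{S}\) rather than \(\mathbf{S}\mathbf{U}\) straight, so that the push-through identity acts on the correct side. The normalization \(\mathbf{b}=\mathbf{1}_N\) from Lemma~\ref{lem-input-one} is exactly what makes \(\mathbf{1}_q\) lie in the range of \(\mathbf{E}\).
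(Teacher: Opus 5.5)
Your proposal is correct and follows essentially the same route as the paper. The paper's $\mathbf{D}=(s_1\mathbf{A}_1,\ldots,s_n\mathbf{A}_n)$ is your $\mathbf{F}\mathbf{S}(\mathbf{s})$, and the rest matches: the intertwining identity $(\mathbf{I}_q-\mathbf{U}\mathbf{S}(\mathbf{s}))\mathbf{E}=\mathbf{E}(\mathbf{I}_N-\sum_j s_j\mathbf{A}_j)$, the identity $\det(\mathbf{I}-\mathbf{C}\mathbf{D})=\det(\mathbf{I}-\mathbf{D}\mathbf{C})$ for the equivalence of invertibility, multiplication by the two inverses, and finally $\mathbf{1}_q=\mathbf{E}\mathbf{1}_N$ together with $\balpha_0\mathbf{E}=\boldeta$.
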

\begin{proof}
The \(k\)-th block row of $\mathbf{U}\mathbf{S}(\mathbf{s})$ equals $(s_{1}\mathbf{A}_{1},\ldots,s_{n}\mathbf{A}_{n})$ for every \(k=1,\ldots,n\), so
\[
  \mathbf{U}\mathbf{S}(\mathbf{s})\mathbf{E} =
  \begin{pmatrix}
    \sum_{j=1}^{n}s_{j}\mathbf{A}_{j}\\ \sum_{j=1}^{n}s_{j}\mathbf{A}_{j}\\ \vdots\\ \sum_{j=1}^{n}s_{j}\mathbf{A}_{j}
  \end{pmatrix}
  =\mathbf{E}\sum_{j=1}^{n}s_{j}\mathbf{A}_{j}.
\]
Consequently,
\begin{equation}\label{eq-intertwining}
(\mathbf{I}_{q}-\mathbf{U}\mathbf{S}(\mathbf{s}))\mathbf{E} = \mathbf{E}\Bigl(\mathbf{I}_{N}-\sum_{j=1}^{n}s_{j}\mathbf{A}_{j}\Bigr).
\end{equation}

We next compare the two invertibility conditions. We use the determinant identity
\[
  \det(\mathbf{I}_{r}-\mathbf{C}\mathbf{D}) = \det(\mathbf{I}_{h}-\mathbf{D}\mathbf{C}), \qquad \mathbf{C}\in\R^{r\times h},\quad \mathbf{D}\in\R^{h\times r};
\]
this is the identity stated in~\cite[Problem~1.3.P28]{hornjohnson2013}. Applying it with
\[
  \mathbf{C}=\mathbf{E}, \qquad \mathbf{D}=(s_{1}\mathbf{A}_{1},\,s_{2}\mathbf{A}_{2},\,\ldots,\,s_{n}\mathbf{A}_{n})
\]
gives
\[
  \det\!\left( \mathbf{I}_{q} - \mathbf{E}(s_{1}\mathbf{A}_{1},\,s_{2}\mathbf{A}_{2},\,\ldots,\,s_{n}\mathbf{A}_{n}) \right) = \det\!\left( \mathbf{I}_{N} - (s_{1}\mathbf{A}_{1},\,s_{2}\mathbf{A}_{2},\,\ldots,\,s_{n}\mathbf{A}_{n})\mathbf{E} \right).
\]
Since
\[
  \mathbf{U}\mathbf{S}(\mathbf{s}) = \mathbf{E}(s_{1}\mathbf{A}_{1},\,s_{2}\mathbf{A}_{2},\,\ldots,\,s_{n}\mathbf{A}_{n})
\]
and
\[
  (s_{1}\mathbf{A}_{1},\,s_{2}\mathbf{A}_{2},\,\ldots,\,s_{n}\mathbf{A}_{n})\mathbf{E} = \sum_{j=1}^{n}s_{j}\mathbf{A}_{j},
\]
we obtain
\[
  \det(\mathbf{I}_{q}-\mathbf{U}\mathbf{S}(\mathbf{s})) = \det\Bigl(\mathbf{I}_{N}-\sum_{j=1}^{n}s_{j}\mathbf{A}_{j}\Bigr).
\]

This equality ensures that the two inverses exist for the same values of \(\mathbf{s}\). Moreover, at \(\mathbf{s}=\mathbf{0}\) the determinant on the right is \(\det(\mathbf{I}_{N})=1\). Hence, by continuity, both matrices are invertible for all \(\mathbf{s}\) in a sufficiently small neighbourhood of the origin. Multiplying \eqref{eq-intertwining} on the left by $(\mathbf{I}_{q}-\mathbf{U}\mathbf{S}(\mathbf{s}))^{-1}$ and on the right by $\bigl(\mathbf{I}_{N}-\sum_{j=1}^{n}s_j\mathbf{A}_j\bigr)^{-1}$ gives
\begin{equation}\label{eq-key}
  \mathbf{E}\Bigl(\mathbf{I}_{N}-\sum_{j=1}^{n}s_{j}\mathbf{A}_{j}\Bigr)^{-1} = (\mathbf{I}_{q}-\mathbf{U}\mathbf{S}(\mathbf{s}))^{-1}\mathbf{E}
\end{equation}
on this common domain of invertibility. Since $\mathbf{1}_{q}=\mathbf{E}\mathbf{1}_{N}$,
\[
  \mathbf{E}\Bigl(\mathbf{I}_{N}-\sum_{j=1}^{n}s_{j}\mathbf{A}_{j}\Bigr)^{-1}\mathbf{1}_{N} = (\mathbf{I}_{q}-\mathbf{U}\mathbf{S}(\mathbf{s}))^{-1}\mathbf{1}_{q}.
\]
Finally, $\balpha_{0}\mathbf{E}=\boldeta$ because $\balpha_{0}=(\boldeta,\mathbf{0}_{1\times(q-N)})$. Therefore,
\[
  \boldeta\Bigl(\mathbf{I}_{N}-\sum_{j=1}^{n}s_{j}\mathbf{A}_{j}\Bigr)^{-1}\mathbf{1}_{N} = \balpha_{0}(\mathbf{I}_{q}-\mathbf{U}\mathbf{S}(\mathbf{s}))^{-1}\mathbf{1}_{q},
\]
as claimed.
\end{proof}

The form $(\mathbf{I}_{q}-\mathbf{U}\mathbf{S}(\mathbf{s}))^{-1}$ is not yet in Kulkarni's format $(-\mathbf{T}+\Delta(\cdot))^{-1}$ with $\mathbf{T}$ invertible. The next lemma addresses this by embedding into a larger system.

\begin{lemma}\label{lem-invert}
Let $\balpha_{0},\mathbf{U},\mathbf{S}(\mathbf{s})$ be as in Lemma~\ref{lem-diag}. Set $\ell=2q$ and define
\[
  \mathbf{B}=\begin{pmatrix}\mathbf{U}&-\mathbf{I}_{q}\\(\mathbf{U}+\mathbf{I}_{q})^{2}&-\mathbf{U}-2\mathbf{I}_{q}\end{pmatrix} \in\R^{\ell\times \ell}.
\]
Set $\mathbf{T}=\mathbf{B}^{-1}$ and
\[
  \Delta_{\mathrm{aug}}(\mathbf{s}) =
  \begin{pmatrix}
    \mathbf{S}(\mathbf{s})&\mathbf{0}_{q\times q}\\ \mathbf{0}_{q\times q}&\mathbf{0}_{q\times q}
  \end{pmatrix}.
\]
Define
\[
  \widehat{\balpha}=(\balpha_{0},\;\mathbf{0}_{1\times q}), \qquad \mathbf{b}_{1}=\mathbf{1}_{\ell}=
  \begin{pmatrix}\mathbf{1}_{q}\\ \mathbf{1}_{q}\end{pmatrix}.
\]
For every \(\mathbf{s}\) such that \(\mathbf{I}_{q}-\mathbf{U}\mathbf{S}(\mathbf{s})\) is invertible, $\mathbf{T}$ is invertible, Hurwitz-stable, and
\[
  \widehat{\balpha}\bigl(-\mathbf{T}+\Delta_{\mathrm{aug}}(\mathbf{s})\bigr)^{-1}(-\mathbf{T}\mathbf{1}_{\ell}) =\balpha_{0}(\mathbf{I}_{q}-\mathbf{U}\mathbf{S}(\mathbf{s}))^{-1}\mathbf{1}_{q}.
\]
\end{lemma}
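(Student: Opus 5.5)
The plan is to reduce the displayed identity to the resolvent of $\mathbf{B}\Delta_{\mathrm{aug}}(\mathbf{s})$ by factoring out $\mathbf{T}=\mathbf{B}^{-1}$, and, separately, to show that every eigenvalue of $\mathbf{B}$ equals $-1$. That gives invertibility and Hurwitz stability of $\mathbf{T}$ at once. The only place where the particular choice of $\mathbf{B}$ matters is this spectral computation, and I expect it to be the main obstacle.

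\emph{Spectrum of $\mathbf{B}$.} All four blocks of $\mathbf{B}$ are polynomials in $\mathbf{U}$, so they commute pairwise, as do the blocks of $\lambda\mathbf{I}_{\ell}-\mathbf{B}$. For a $2\times2$ block matrix whose lower-left and lower-right blocks commute, the determinant equals $\det$ of the ``formal determinant'' of the blocks. Hence
\[
  \det(\lambda\mathbf{I}_{\ell}-\mathbf{B})
  =\det\Bigl((\lambda\mathbf{I}_q-\mathbf{U})(\lambda\mathbf{I}_q+\mathbf{U}+2\mathbf{I}_q)+(\mathbf{U}+\mathbf{I}_q)^{2}\Bigr).
\]
Expanding the argument, the terms $-\mathbf{U}^{2}-2\mathbf{U}$ cancel against $(\mathbf{U}+\mathbf{I}_q)^{2}$. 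What remains is $(\lambda^{2}+2\lambda+1)\mathbf{I}_q$, so
\[
  \det(\lambda\mathbf{I}_{\ell}-\mathbf{B})=(\lambda+1)^{2q}.
\]
Therefore $\mathbf{B}$ is invertible, and $\mathbf{T}=\mathbf{B}^{-1}$ has every eigenvalue equal to $-1$, so $\mathbf{T}$ is Hurwitz-stable.

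\emph{Resolvent identity.} I write
\[
  -\mathbf{T}+\Delta_{\mathrm{aug}}(\mathbf{s})=-\mathbf{B}^{-1}\bigl(\mathbf{I}_{\ell}-\mathbf{B}\Delta_{\mathrm{aug}}(\mathbf{s})\bigr).
\]
Whenever the right-hand factor is invertible, this gives
\[
  \bigl(-\mathbf{T}+\Delta_{\mathrm{aug}}(\mathbf{s})\bigr)^{-1}(-\mathbf{T}\mathbf{1}_{\ell})
  =-\bigl(\mathbf{I}_{\ell}-\mathbf{B}\Delta_{\mathrm{aug}}\bigr)^{-1}\mathbf{B}\bigl(-\mathbf{B}^{-1}\mathbf{1}_{\ell}\bigr)
  =\bigl(\mathbf{I}_{\ell}-\mathbf{B}\Delta_{\mathrm{aug}}\bigr)^{-1}\mathbf{1}_{\ell}.
\]
Because the second block column of $\Delta_{\mathrm{aug}}$ vanishes, we have
\[
  \mathbf{I}_{\ell}-\mathbf{B}\Delta_{\mathrm{aug}}(\mathbf{s})=\begin{pmatrix}\mathbf{I}_q-\mathbf{U}\mathbf{S}(\mathbf{s})&\mathbf{0}_{q\times q}\\-(\mathbf{U}+\mathbf{I}_q)^{2}\mathbf{S}(\mathbf{s})&\mathbf{I}_q\end{pmatrix}.
\]
This matrix is block lower-triangular, with determinant $\det(\mathbf{I}_q-\mathbf{U}\mathbf{S}(\mathbf{s}))$. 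So it is invertible exactly under the hypothesis of the lemma.

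By the block lower-triangular inverse formula already used in the proof of Theorem~\ref{thm-alpay}, the first block of $(\mathbf{I}_{\ell}-\mathbf{B}\Delta_{\mathrm{aug}})^{-1}\mathbf{1}_{\ell}$ is $(\mathbf{I}_q-\mathbf{U}\mathbf{S}(\mathbf{s}))^{-1}\mathbf{1}_q$. The second block is some vector that plays no role. Multiplying by $\widehat{\balpha}=(\balpha_0,\mathbf{0}_{1\times q})$ selects the first block, which yields $\balpha_0(\mathbf{I}_q-\mathbf{U}\mathbf{S}(\mathbf{s}))^{-1}\mathbf{1}_q$, as claimed.

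Apart from the commuting-block determinant step, everything is routine block algebra. For that step I would cite the standard result on block matrices with commuting blocks. Alternatively, I would note that $\mathbf{B}$ lies in the commutative algebra generated by $\mathbf{U}$ and triangularize $\mathbf{U}$ over $\mathbb{C}$. This reduces $\mathbf{B}$, up to a permutation similarity, to a block-triangular matrix with $2\times2$ diagonal blocks
\[
  \begin{pmatrix}\mu&-1\\(\mu+1)^{2}&-\mu-2\end{pmatrix},
\]
one for each eigenvalue $\mu$ of $\mathbf{U}$. Each such block has trace $-2$ and determinant $1$, so its eigenvalues are $-1,-1$.
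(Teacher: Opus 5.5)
Your proposal is correct and follows the paper's proof essentially step for step. Both use the factorization $-\mathbf{T}+\Delta_{\mathrm{aug}}(\mathbf{s})=-\mathbf{B}^{-1}\bigl(\mathbf{I}_{\ell}-\mathbf{B}\Delta_{\mathrm{aug}}(\mathbf{s})\bigr)$ with the block lower-triangular inverse, and the commuting-block determinant formula for the spectrum; the only difference is that you compute $\det(\lambda\mathbf{I}_{\ell}-\mathbf{B})=(\lambda+1)^{2q}$ and pass to $\mathbf{T}=\mathbf{B}^{-1}$ by taking reciprocal eigenvalues, whereas the paper first writes $\mathbf{B}^{-1}$ explicitly and then computes $\det(\lambda\mathbf{I}_{\ell}-\mathbf{T})$ (your route avoids needing the explicit inverse, and you also note that $\mathbf{I}_{\ell}-\mathbf{B}\Delta_{\mathrm{aug}}(\mathbf{s})$ is invertible exactly under the lemma's hypothesis).
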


\begin{proof}
Direct block multiplication verifies that $\mathbf{B}$ is invertible with inverse
\[
  \mathbf{T}=\mathbf{B}^{-1}=\begin{pmatrix}-\mathbf{U}-2\mathbf{I}_{q}&\mathbf{I}_{q}\\-(\mathbf{U}+\mathbf{I}_{q})^{2}&\mathbf{U}\end{pmatrix}.
\]
In particular, $\mathbf{T}$ is invertible. We next verify that it is Hurwitz-stable. Since the blocks of $\mathbf{T}$ are polynomials in $\mathbf{U}$, they commute, and the standard determinant formula for a $2\times2$ block matrix with commuting blocks gives
\[
\begin{aligned}
  \det(\lambda\mathbf{I}_{\ell}-\mathbf{T})
  &=
  \det\!\left(
  ((\lambda+2)\mathbf{I}_{q}+\mathbf{U})(\lambda\mathbf{I}_{q}-\mathbf{U})
  +(\mathbf{U}+\mathbf{I}_{q})^{2}
  \right) \\
  &=
  \det\bigl((\lambda+1)^{2}\mathbf{I}_{q}\bigr)
  =
  (\lambda+1)^{2q}.
\end{aligned}
\]
Hence every eigenvalue of $\mathbf{T}$ is equal to $-1$, and therefore $\mathbf{T}$ is Hurwitz-stable. Since
\[
  -\mathbf{T}+\Delta_{\mathrm{aug}}(\mathbf{s}) = -\mathbf{B}^{-1}\bigl(\mathbf{I}_{\ell}-\mathbf{B}\Delta_{\mathrm{aug}}(\mathbf{s})\bigr),
\]
we have
\[
  \begin{aligned}
  \widehat{\balpha}\bigl(-\mathbf{T}+\Delta_{\mathrm{aug}}(\mathbf{s})\bigr)^{-1}(-\mathbf{T}\mathbf{1}_{\ell}) &= \widehat{\balpha} \bigl[-\mathbf{B}^{-1}\bigl(\mathbf{I}_{\ell}-\mathbf{B}\Delta_{\mathrm{aug}}(\mathbf{s})\bigr)\bigr]^{-1} (-\mathbf{B}^{-1}\mathbf{1}_{\ell})\\
  &= \widehat{\balpha} \bigl(\mathbf{I}_{\ell}-\mathbf{B}\Delta_{\mathrm{aug}}(\mathbf{s})\bigr)^{-1} (-\mathbf{B})(-\mathbf{B}^{-1}\mathbf{1}_{\ell})\\
  &= \widehat{\balpha} \bigl(\mathbf{I}_{\ell}-\mathbf{B}\Delta_{\mathrm{aug}}(\mathbf{s})\bigr)^{-1} \mathbf{1}_{\ell}.
  \end{aligned}
\]
Moreover,
\[
  \mathbf{B}\Delta_{\mathrm{aug}}(\mathbf{s}) =
  \begin{pmatrix}
    \mathbf{U}\mathbf{S}(\mathbf{s})&\mathbf{0}_{q\times q}\\ (\mathbf{U}+\mathbf{I}_{q})^{2}\mathbf{S}(\mathbf{s})&\mathbf{0}_{q\times q}
  \end{pmatrix},
\]
and hence
\[
  \mathbf{I}_{\ell}-\mathbf{B}\Delta_{\mathrm{aug}}(\mathbf{s}) =
  \begin{pmatrix}
    \mathbf{I}_{q}-\mathbf{U}\mathbf{S}(\mathbf{s})&\mathbf{0}_{q\times q}\\ -(\mathbf{U}+\mathbf{I}_{q})^{2}\mathbf{S}(\mathbf{s})&\mathbf{I}_{q}
  \end{pmatrix}.
\]
Using the block inverse formula for lower triangular block matrices,
\[
  \bigl(\mathbf{I}_{\ell}-\mathbf{B}\Delta_{\mathrm{aug}}(\mathbf{s})\bigr)^{-1} =
  \begin{pmatrix}
    (\mathbf{I}_{q}-\mathbf{U}\mathbf{S}(\mathbf{s}))^{-1}&\mathbf{0}_{q\times q}\\ (\mathbf{U}+\mathbf{I}_{q})^{2}\mathbf{S}(\mathbf{s})(\mathbf{I}_{q}-\mathbf{U}\mathbf{S}(\mathbf{s}))^{-1}&\mathbf{I}_{q}
  \end{pmatrix}.
\]
Because $\widehat{\balpha}=(\balpha_{0},\mathbf{0}_{1\times q})$ is supported on the first $q$ coordinates,
\[
  \widehat{\balpha} \bigl(\mathbf{I}_{\ell}-\mathbf{B}\Delta_{\mathrm{aug}}(\mathbf{s})\bigr)^{-1} \mathbf{1}_{\ell} = \balpha_{0}(\mathbf{I}_{q}-\mathbf{U}\mathbf{S}(\mathbf{s}))^{-1}\mathbf{1}_{q}.
\]
This concludes the proof.
\end{proof}

Combining the preceding lemmas, we now prove the main result of this section.
\begin{theorem}\label{thm-mvme}
For every $n\ge 2$, $\;\MVME{n}=\MME{n}$. More precisely, every $\bm{X}\in\MVME{n}$ admits an $\MME{n}$ representation $(\balpha,\mathbf{T},\mathbf{K})$ satisfying $\mathbf{t}=-\mathbf{T}\mathbf{1}$, $\;0\le\balpha\mathbf{1}\le 1$, and $\mathbf{K}\ge 0$.
\end{theorem}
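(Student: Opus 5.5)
The plan is to chain Theorem~\ref{thm-alpay}, Lemma~\ref{lem-input-one}, Lemma~\ref{lem-diag} and Lemma~\ref{lem-invert}. This produces a Kulkarni-form expression for the \emph{whole} transform \(L_{\bm X}\), taking \(p_0=0\) in the representation. We then read off the claimed normalizations from the construction.

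The inclusion \(\MME{n}\subseteq\MVME{n}\) is Remark~\ref{rem-trivial}, so let \(\bm X\in\MVME{n}\). Write \(L_{\bm X}=P/Q\) in reduced form with \(Q(0)\neq0\). Then \(\widetilde L:=L_{\bm X}\) is a real rational function, analytic at the origin, and \(\widetilde L(\mathbf 0)=1\), so in particular \(\widetilde L\not\equiv 0\).

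First, Theorem~\ref{thm-alpay} gives \(\widetilde L(\mathbf s)=\boldeta(\mathbf I_N-\sum_j s_j\mathbf A_j)^{-1}\mathbf b\); Remark~\ref{rem-real-realization} lets us take this realization real. Second, Lemma~\ref{lem-input-one} lets us assume \(\mathbf b=\mathbf 1_N\). Third, Lemma~\ref{lem-diag} rewrites this as \(\balpha_0(\mathbf I_q-\mathbf U\mathbf S(\mathbf s))^{-1}\mathbf 1_q\). Fourth, Lemma~\ref{lem-invert} gives a Hurwitz-stable \(\mathbf T=\mathbf B^{-1}\in\R^{\ell\times\ell}\) with
\[
\widetilde L(\mathbf s)=\widehat{\balpha}\bigl(-\mathbf T+\Delta_{\mathrm{aug}}(\mathbf s)\bigr)^{-1}(-\mathbf T\mathbf 1_\ell)
\]
on a neighbourhood of the origin.

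Next I would identify \(\Delta_{\mathrm{aug}}(\mathbf s)\) as \(\Delta(\mathbf K\mathbf s)\) for an explicit \(\mathbf K\in\R^{\ell\times n}\). For \(j=1,\ldots,n\), the \(N\) rows of \(\mathbf K\) indexed by the \(j\)-th diagonal block of \(\mathbf S(\mathbf s)\) are all equal to the unit row vector \(\mathbf e_j^{\top}\). The last \(q\) rows of \(\mathbf K\) are zero. Then \((\mathbf K\mathbf s)_i\) equals \(s_j\) or \(0\), as required, and \(\mathbf K\) has entries in \(\{0,1\}\), so \(\mathbf K\ge0\). The terminal column is \(\mathbf t=-\mathbf T\mathbf 1\) by construction. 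We take \(\balpha=\widehat{\balpha}\) and define \(p_0=1-\balpha\mathbf 1\).

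To check \(p_0=0\), evaluate at \(\mathbf s=\mathbf 0\), which lies in the neighbourhood of validity. There \(\balpha(-\mathbf T)^{-1}(-\mathbf T\mathbf 1)=\balpha\mathbf 1\), and this must equal \(\widetilde L(\mathbf 0)=1\). Hence \(\balpha\mathbf 1=1\in[0,1]\) and \(p_0=0\), so the formula \eqref{eq-kulkarni} reads \(L_{\bm X}(\mathbf s)=p_0+\balpha(-\mathbf T+\Delta(\mathbf K\mathbf s))^{-1}\mathbf t\). Any atom of \(\bm X\) at zero is then carried implicitly by the representation rather than by \(p_0\); this is harmless because \(\MME{n}\) imposes no sign or Markov constraints.

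The main point needing care is that the identity from Lemmas~\ref{lem-diag}--\ref{lem-invert} is established only where the relevant resolvents exist, namely near the origin. I would extend it as follows. Both sides are rational functions of \(\mathbf s\): the right side is a ratio of polynomials with denominator \(\det(-\mathbf T+\Delta(\mathbf K\mathbf s))\), which is nonzero at \(\mathbf 0\). Two rational functions that agree on a nonempty open set agree as rational functions. Hence \(L_{\bm X}\) and the Kulkarni expression coincide wherever both are defined, which is exactly the sense in which \eqref{eq-kulkarni} is required in Definition~\ref{def-kulkarni}.

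Finally, \(L_{\bm X}\) is the transform of a genuine probability law by hypothesis, and \(\mathbf T\) is Hurwitz-stable by Lemma~\ref{lem-invert}. So \((\balpha,\mathbf T,\mathbf K)\) is an \(\MME{n}\) representation with the three stated normalizations, which gives \(\MVME{n}\subseteq\MME{n}\) and hence equality.
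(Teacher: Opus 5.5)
Your proof is correct and uses the same chain as the paper: Theorem~\ref{thm-alpay}, Lemmas~\ref{lem-input-one}--\ref{lem-invert}, the same reward matrix $\mathbf K$, evaluation at $\mathbf s=\mathbf 0$, and extension by rationality. The one difference is that you realize $L_{\bm X}$ itself rather than $L_{\bm X}-\mathbb{P}(\bm X=\mathbf 0)$; this is legitimate because Theorem~\ref{thm-alpay} only needs analyticity at the origin, it also removes the paper's separate case for the point mass at $\mathbf 0$, and it gives $\balpha\mathbf 1=1$, $p_0=0$ instead of the paper's $p_0=\mathbb{P}(\bm X=\mathbf 0)$, which the definition of $\MME{n}$ does not require.
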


\begin{proof}
The inclusion $\MME{n}\subseteq\MVME{n}$ is Remark~\ref{rem-trivial}. We prove $\MVME{n}\subseteq\MME{n}$. We begin by separating the possible atom at the origin from the strictly proper part of the transform.

Let $\bm{X}\in\MVME{n}$ with joint Laplace transform $L_{\bm{X}}(\mathbf{s})$. Let $p_{0}=\mathbb{P}(\bm{X}=\mathbf{0})$ be the atom at the origin, and write
\[
  L_{\bm{X}}(\mathbf{s})=p_{0}+\widetilde{L}(\mathbf{s}).
\]
Then \(\widetilde{L}\) is the part of the transform that vanishes when all transform variables tend to infinity. More precisely, for every $\mathbf r\in(0,\infty)^n$, dominated convergence gives $\widetilde L(\lambda\mathbf r)\to0$ as $\lambda\to\infty$; for a rational function this is the strict properness at infinity required in Theorem~\ref{thm-alpay}. If $\widetilde{L}\equiv 0$, then $L_{\bm{X}}\equiv p_{0}=1$, and the degenerate law at the origin belongs to $\MME{n}$ by choosing any Hurwitz-stable invertible $\mathbf{T}$ and setting $\balpha=0$. Hence assume $\widetilde{L}\not\equiv0$.
By Theorem~\ref{thm-alpay} and Lemma~\ref{lem-input-one}, we may fix a realization of the form
\[
  \widetilde{L}(\mathbf{s}) = \boldeta\Bigl(\mathbf{I}_{N}-\sum_{j=1}^{n}s_{j}\mathbf{A}_{j}\Bigr)^{-1}\mathbf{1}_{N}.
\]

Apply Lemma~\ref{lem-diag} to obtain
\[
  \widetilde{L}(\mathbf{s}) = \balpha_{0}(\mathbf{I}_{q}-\mathbf{U}\mathbf{S}(\mathbf{s}))^{-1}\mathbf{1}_{q}.
\]
Lemma~\ref{lem-invert} gives
\[
  \widetilde{L}(\mathbf{s})=\widehat{\balpha} (-\mathbf{T}+\Delta_{\mathrm{aug}}(\mathbf{s}))^{-1} (-\mathbf{T}\mathbf{1}_{\ell}),
\]
with $\mathbf{T}$ invertible and Hurwitz-stable. Set
\[
  \balpha=\widehat{\balpha},\qquad \mathbf t=-\mathbf T\mathbf 1_{\ell}.
\]

The matrix
\[
  \Delta_{\mathrm{aug}}(\mathbf{s}) = \diag(s_{1}\mathbf{I}_{N},\ldots,s_{n}\mathbf{I}_{N},\mathbf{0}_{q\times q})
\]
can be written in the reward form
\[
  \Delta_{\mathrm{aug}}(\mathbf{s})=\Delta(\mathbf{K}\mathbf{s}).
\]
Indeed, with \(q=nN\) and \(\ell=2q\), let
\[
  \mathbf{K} =
  \begin{pmatrix}
    \mathbf{I}_{n}\otimes \mathbf{1}_{N}\\ \mathbf{0}_{q\times n}
  \end{pmatrix}
  \in\R^{\ell\times n},
\]
where $\otimes$ denotes the Kronecker product between matrices or vectors. Then
\[
  \mathbf{K}\mathbf{s} =
  \begin{pmatrix}
    s_{1}\mathbf{1}_{N}\\ s_{2}\mathbf{1}_{N}\\ \vdots\\ s_{n}\mathbf{1}_{N}\\ \mathbf{0}_{q}
  \end{pmatrix},
\]
and therefore
\[
  \Delta(\mathbf{K}\mathbf{s}) = \diag(s_{1}\mathbf{I}_{N},\ldots,s_{n}\mathbf{I}_{N},\mathbf{0}_{q\times q}) = \Delta_{\mathrm{aug}}(\mathbf{s}).
\]
The equality of the original transform and the constructed resolvent is obtained first on a neighbourhood of the origin where all displayed inverses exist; since both sides are rational functions, it then holds identically on their common domain. Evaluating at $\mathbf{s}=0$ gives
\[
  1 = L_{\bm{X}}(0) = p_{0} + \balpha(-\mathbf{T})^{-1}\mathbf{t} = p_{0} + \balpha\mathbf{1}_{\ell},
\]
because $\mathbf{t}=-\mathbf{T}\mathbf{1}_{\ell}$. Hence $p_{0}=1-\balpha\mathbf{1}_{\ell}$, $0\le \balpha\mathbf{1}_{\ell}\le 1$. Therefore
\[
  L_{\bm{X}}(\mathbf{s}) = p_{0} + \balpha (-\mathbf{T}+\Delta(\mathbf{K}\mathbf{s}))^{-1} \mathbf{t}
\]
is a representation of the form~\eqref{eq-kulkarni} without sign constraints. The representation satisfies all three conditions of Remark~\ref{rem-normalization}: $\mathbf{t}=-\mathbf{T}\mathbf{1}_{\ell}$, $0\le\balpha\mathbf{1}_{\ell}\le 1$ by the displayed identity above, and $\mathbf{K}\ge 0$ since it is assembled from identity and zero blocks. This proves $\bm{X}\in\MME{n}$, and therefore $\MVME{n}=\MME{n}$.
\end{proof}

\section[Separation of MPH* and MVPH]{Separation of $\MPH{n}$ and $\MVPH{n}$}
\label{sec-separation}

The key ingredient is a factorization property for \(\MPH{n}\) that, to the best of our knowledge, does not appear to have been identified before. Whenever a transform is represented in Kulkarni's \(\MPH{n}\) form, the highest-degree part of its denominator must split into real linear factors with non-negative coefficients; the Markovian sign constraints on the generator and reward matrix impose this structure, as we will see below.

We start with the standard M-matrix fact needed in the argument. If \(\mathbf{T}\) is the subgenerator of a transient continuous-time Markov chain, then every principal submatrix of \(-\mathbf{T}\) is again a nonsingular M-matrix, and every principal minor of \(-\mathbf{T}\) is strictly positive~\cite[Chap.~6]{bermanplemmons1994}. The relevance for us is that, when the determinant $\det(-\mathbf{T}+\Delta(\mathbf{K}\mathbf{s}))$ is expanded, the highest-degree contribution is a principal minor of \(-\mathbf{T}\) multiplied by a product of reward linear forms. Since \(\mathbf{K}\) is non-negative and the relevant principal minor is strictly positive, this leading contribution cannot vanish through a zero coefficient.

For a non-zero polynomial \(F\) in the real variables \(s_{1},\ldots,s_{n}\), we write \(F_{\mathrm{top}}\) for its leading homogeneous part, obtained by retaining only those monomials whose total degree is maximal. For example, if $F(s_{1},s_{2},s_{3})=2+s_{1}-3s_{2}+s_{1}^{2}s_{3}+4s_{2}^{3}$, then the maximal total degree is \(3\), and hence $F_{\mathrm{top}}(s_{1},s_{2},s_{3})=s_{1}^{2}s_{3}+4s_{2}^{3}$. We shall use this operation to pass from a full denominator to its highest-degree contribution. The next elementary lemma records the only general fact about leading homogeneous parts that is needed later.

\begin{lemma}\label{lem-leading}
Let $F$ and $G$ be non-zero polynomials over the real field in the variables $s_{1},\ldots,s_{n}$. Suppose that $F=GH$ for another polynomial $H$. Denote the leading homogeneous parts by $F_{\mathrm{top}}$, $G_{\mathrm{top}}$, and $H_{\mathrm{top}}$. Then
\[
  F_{\mathrm{top}}=G_{\mathrm{top}}H_{\mathrm{top}}.
\]
In particular, the leading homogeneous part of $G$ is a factor of the leading homogeneous part of $F$.
\end{lemma}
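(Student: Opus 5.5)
The plan is to decompose each polynomial into homogeneous components and compare top-degree terms. Write $d=\deg G$ and $e=\deg H$; here $H\neq 0$, since otherwise $F=0$. Then write
\[
  G=\sum_{i=0}^{d}G_i,\qquad H=\sum_{k=0}^{e}H_k,
\]
where $G_i$ and $H_k$ are homogeneous of degree $i$ and $k$, so that $G_{\mathrm{top}}=G_d\ne0$ and $H_{\mathrm{top}}=H_e\ne0$. Expanding the product gives
\[
  F=GH=\sum_{i,k}G_iH_k .
\]
Each $G_iH_k$ is either zero or homogeneous of degree $i+k\le d+e$. Equality $i+k=d+e$ holds only for $(i,k)=(d,e)$. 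Hence the homogeneous component of $F$ of degree $d+e$ is exactly $G_dH_e$, and every component of higher degree vanishes.

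The only substantive step is to check that $G_dH_e\neq0$. This uses the fact that $\R[s_1,\ldots,s_n]$ is an integral domain: a product of two non-zero polynomials is non-zero. Consequently $\deg F=d+e$, and the leading homogeneous part of $F$ is precisely the degree-$(d+e)$ component. This yields $F_{\mathrm{top}}=G_{\mathrm{top}}H_{\mathrm{top}}$.

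The final clause then follows immediately, because $H_{\mathrm{top}}$ is a polynomial cofactor. I do not expect any real obstacle. The one point requiring care is the non-vanishing of $G_dH_e$, which is exactly where the integral-domain property is used. Without it, the top-degree terms could cancel and the degree of $F$ would drop.
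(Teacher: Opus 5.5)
Your proposal is correct and follows essentially the same route as the paper: you isolate the top homogeneous components, observe that $G_{\mathrm{top}}H_{\mathrm{top}}$ is the only contribution in degree $\deg G+\deg H$, and invoke the integral-domain property of $\R[s_1,\ldots,s_n]$ to rule out cancellation. Your explicit remark that $H\neq0$ because $F\neq0$ is a small point that the paper leaves implicit.
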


\begin{proof}
Write $G=G_{\mathrm{top}}+G_{<}$ and $H=H_{\mathrm{top}}+H_{<}$, where the terms with subscript $<$ have strictly lower total degree. Then
\[
  GH = G_{\mathrm{top}}H_{\mathrm{top}} + G_{\mathrm{top}}H_{<} + G_{<}H_{\mathrm{top}} + G_{<}H_{<}.
\]
The term $G_{\mathrm{top}}H_{\mathrm{top}}$ is homogeneous of degree $\deg G+\deg H$, while every remaining term has strictly smaller total degree. Since the product of two non-zero polynomials over $\R$ is non-zero, the leading homogeneous part of $F=GH$ is $G_{\mathrm{top}}H_{\mathrm{top}}$.
\end{proof}

\begin{theorem}[$\MPH{n}$ factorization]\label{thm-invariant}
Let $\bm{Y}\in\MPH{n}$ with joint Laplace transform $L_{\bm{Y}}(\mathbf{s})=P(\mathbf{s})/Q(\mathbf{s})$ in minimal form. After multiplying numerator and denominator by a non-zero scalar if necessary, the leading homogeneous part $Q_{\mathrm{top}}(\mathbf{s})$ of $Q$ factors as a product of real linear functions with non-negative coefficients,
\[
  Q_{\mathrm{top}}(\mathbf{s})=c_{0}\prod_{i=1}^{\nu}\ell_{i}(\mathbf{s}),\qquad \ell_{i}(\mathbf{s})=\sum_{j=1}^{n}\lambda_{ij}s_{j},\quad\lambda_{ij}\ge 0,
\]
for some $c_{0}>0$ (with $\nu=0$ and the empty product allowed when the minimal denominator is constant).
\end{theorem}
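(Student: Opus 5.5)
We write $\mathbf{M}(\mathbf{s})=-\mathbf{T}+\Delta(\mathbf{K}\mathbf{s})$ and $D(\mathbf{s})=\det\mathbf{M}(\mathbf{s})$, using a Markovian representation $(\balpha,\mathbf{T},\mathbf{K})$ of $\bm{Y}$. By Cramer's rule, $\balpha\,\mathbf{M}(\mathbf{s})^{-1}\mathbf{t}=\balpha\,\mathrm{adj}(\mathbf{M}(\mathbf{s}))\,\mathbf{t}/D(\mathbf{s})$. Hence
\[
L_{\bm{Y}}(\mathbf{s})=\frac{p_0D(\mathbf{s})+\balpha\,\mathrm{adj}(\mathbf{M}(\mathbf{s}))\,\mathbf{t}}{D(\mathbf{s})},
\]
which is a ratio of polynomials. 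Since $P/Q$ is the minimal form, unique factorization in $\R[s_1,\ldots,s_n]$ gives $D=QH$ for some polynomial $H$, after scaling $P$ and $Q$. Lemma~\ref{lem-leading} then yields $D_{\mathrm{top}}=Q_{\mathrm{top}}H_{\mathrm{top}}$. It therefore suffices to show two things: that $D_{\mathrm{top}}$ is a positive constant times a product of linear forms with non-negative coefficients, and that every factor of such a product has the same shape.

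\emph{Expansion of $D$.} Since $\Delta(\mathbf{K}\mathbf{s})$ is diagonal, multilinearity of the determinant in its columns gives the principal-minor expansion
\[
D(\mathbf{s})=\sum_{S\subseteq\{1,\ldots,m\}}\Bigl(\prod_{i\in S}(\mathbf{K}\mathbf{s})_i\Bigr)\det\bigl((-\mathbf{T})_{S^c,S^c}\bigr),
\]
with the convention that the empty minor equals $1$. Call a state $i$ \emph{rewarded} if row $i$ of $\mathbf{K}$ is non-zero, and let $R$ be the set of rewarded states. Any term with $S\not\subseteq R$ vanishes. For $S\subseteq R$, the term is homogeneous of degree $|S|$, and it is non-zero because each factor $(\mathbf{K}\mathbf{s})_i$ is a non-zero linear form and each principal minor of the M-matrix $-\mathbf{T}$ is strictly positive. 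The maximal degree is therefore $|R|$, attained by $S=R$ alone. This gives
\[
D_{\mathrm{top}}(\mathbf{s})=\det\bigl((-\mathbf{T})_{R^c,R^c}\bigr)\prod_{i\in R}\sum_{j}\mathbf{K}_{ij}s_j,
\]
with a positive constant in front and non-negative coefficients in each linear form.

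\emph{Descending to $Q_{\mathrm{top}}$.} Each linear form $\ell_i$ is irreducible in $\R[\mathbf{s}]$, and $\R[\mathbf{s}]$ is a UFD. Since $Q_{\mathrm{top}}$ divides $D_{\mathrm{top}}$, we get $Q_{\mathrm{top}}=c\prod_{i\in R'}\ell_i$ for some subset $R'\subseteq R$ and some real $c\ne0$, up to associates, which are real scalars. If $c<0$, multiply $P$ and $Q$ by $-1$ to obtain $c_0>0$. If $Q$ is constant, the product is empty. One point needs care here: associates of $\ell_i$ may carry a negative scalar, but such scalars can be absorbed into $c$, so every factor keeps non-negative coefficients.

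\emph{Main obstacle.} The hardest step is confirming that the leading part of $D$ cannot cancel. Cancellation is ruled out because only the single subset $S=R$ contributes in top degree, and that contribution is non-zero. This rests on the product of non-zero polynomials being non-zero, together with the strict positivity of principal minors of $-\mathbf{T}$ cited from \cite{bermanplemmons1994}. A minor secondary point is that $Q$ divides $D$ even though the numerator contains the extra term $p_0D$. This follows because $Q$ divides any denominator $D$ for which $L_{\bm{Y}}\cdot D$ is a polynomial.
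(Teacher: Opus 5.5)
Your proposal is correct and follows the paper's proof essentially step for step. It uses the adjugate representation to get $Q\mid\det(-\mathbf{T}+\Delta(\mathbf{K}\mathbf{s}))$, then the principal-minor expansion, whose unique top-degree term comes from the set of rewarded states with a strictly positive M-matrix minor as coefficient, and finally Lemma~\ref{lem-leading} plus unique factorization to pass this structure to $Q_{\mathrm{top}}$; your explicit treatment of associates and of the sign of $c_0$ only spells out what the paper compresses into its final sentence.
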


\begin{proof}
Let an \(m\)-state reward-based representation of \(\bm{Y}\) be given. Thus \(\balpha\in\R_{+}^{1\times m}\) is a sub-probability row vector, \(\mathbf{T}\in\R^{m\times m}\) is a transient subgenerator, \(\mathbf{K}\in\R_{+}^{m\times n}\) is the reward matrix, \(\mathbf{t}=-\mathbf{T}\mathbf{1}\), and
\[
  L_{\bm{Y}}(\mathbf{s}) = p_{0} + \balpha \bigl(-\mathbf{T}+\Delta(\mathbf{K}\mathbf{s})\bigr)^{-1} \mathbf{t}.
\]
Set
\[
  F(\mathbf{s}) = \det\bigl(-\mathbf{T}+\Delta(\mathbf{K}\mathbf{s})\bigr).
\]
Using the adjugate identity \(\mathbf{M}^{-1}=(\det\mathbf{M})^{-1}\operatorname{adj}(\mathbf{M})\) for \(\mathbf{M}=-\mathbf{T}+\Delta(\mathbf{K}\mathbf{s})\), the transform can be written as
\[
  L_{\bm{Y}}(\mathbf{s}) = \frac{ p_{0}F(\mathbf{s}) + \balpha\,\operatorname{adj}\bigl(-\mathbf{T}+\Delta(\mathbf{K}\mathbf{s})\bigr)\mathbf{t} }{ F(\mathbf{s}) }.
\]
Because the rational function \(P(\mathbf{s})/Q(\mathbf{s})\) is in minimal form, \(P\) and \(Q\) are coprime. Therefore the minimal denominator \(Q(\mathbf{s})\) is a factor of \(F(\mathbf{s})\).

Let $I\subseteq\{1,\ldots,m\}$ denote the set of indices $i$ for which the reward row $\mathbf{K}_{i,\cdot}$ is not identically zero. For each $i\in I$, write
\[
  \kappa_{i}(\mathbf{s})=\sum_{j=1}^{n}\mathbf{K}_{ij}s_{j}.
\]
The \(\MPH{n}\) constraint gives \(\mathbf{K}_{ij}\ge0\), so every coefficient of \(\kappa_i\) is non-negative. Since \(i\in I\), at least one of these coefficients is positive.

We use the identity
\[
  \det(A+\diag(d_1,\ldots,d_m))
  =
  \sum_{J\subseteq\{1,\ldots,m\}}
  \det(A_{J^c,J^c})\prod_{i\in J}d_i,
\]
where \(A_{J^c,J^c}\) denotes the principal submatrix obtained by retaining the rows and columns indexed by \(J^c\), and where the determinant of the empty matrix and the empty product are both taken to be \(1\). This identity follows directly from the determinant expansion by collecting terms according to the set of diagonal entries \(d_i\) selected. Applying it with \(A=-\mathbf T\) and \(d_i=\kappa_i(\mathbf s)\), where \(\kappa_i(\mathbf s)=0\) for \(i\notin I\), gives
\[
  F(\mathbf{s}) = \sum_{J\subseteq\{1,\ldots,m\}} \det\bigl((-\mathbf{T})_{J^{c},J^{c}}\bigr) \prod_{i\in J}\kappa_i(\mathbf{s}).
\]

Since $\kappa_i\equiv0$ for $i\notin I$, the non-zero terms in this expansion have total degree at most $|I|$. The unique term of degree $|I|$ is obtained by taking $J=I$. Therefore
\[
  F_{\mathrm{top}}(\mathbf{s}) = \det\bigl((-\mathbf{T})_{I^{c},I^{c}}\bigr) \prod_{i\in I}\kappa_i(\mathbf{s}).
\]
By the M-matrix fact recalled at the beginning of the section, the coefficient $\det\bigl((-\mathbf{T})_{I^{c},I^{c}}\bigr)$ is strictly positive, again with the convention that the determinant of the empty matrix is \(1\). Hence \(F_{\mathrm{top}}\) is a positive scalar multiple of a product of real linear functions with non-negative coefficients.

Since \(Q\) divides \(F\), Lemma~\ref{lem-leading} implies that \(Q_{\mathrm{top}}\) divides \(F_{\mathrm{top}}\). Consequently \(Q_{\mathrm{top}}\) is a non-zero scalar multiple of a product of some of the linear factors \(\kappa_i\). Multiplying \(P\) and \(Q\) by a non-zero scalar if necessary, this scalar may be chosen positive. This gives
\[
  Q_{\mathrm{top}}(\mathbf{s}) = c_{0}\prod_{i=1}^{\nu}\ell_i(\mathbf{s}), \qquad c_{0}>0,
\]
where each \(\ell_i\) is one of the linear functions \(\kappa_r\), possibly after relabelling. Each \(\ell_i\) therefore has non-negative coefficients. This proves the stated factorization.
\end{proof}
We now construct an explicit distribution in \(\MVPH{3}\setminus\MPH{3}\) from a central Wishart matrix. To the best of our knowledge, this connection between Wishart trace functionals and the projection-based multivariate phase-type class has not been explored before. We use only standard facts about the Wishart distribution, namely its density and determinant-form transform~\cite[Eq.~(3.2.1) and Eq.~(3.3.9)]{guptanagar2000}.

In the notation \(\mathcal{W}_{2}(2,\tfrac12 \mathbf{I}_2)\), the first subscript indicates that the random matrix is \(2\times2\), the first parameter is the number of degrees of freedom, and the second parameter is the scale matrix. Thus a random matrix \(\bm{Z}\sim\mathcal{W}_{2}(2,\tfrac12\mathbf{I}_{2})\) is supported on the cone of \(2\times2\) symmetric positive semidefinite matrices. In the present case the distribution is absolutely continuous on the positive-definite cone. The boundary consists of the singular positive semidefinite matrices, equivalently those with determinant zero, and has probability zero. Its density, with respect to Lebesgue measure on the three free coordinates \((z_{11},z_{22},z_{12})\), with \(z_{21}=z_{12}\) determined by symmetry, is
\[
  f_{\bm{Z}}(\mathbf{z}) = \pi^{-1}\det(\mathbf{z})^{-1/2}e^{-\Tr(\mathbf{z})}, \qquad \mathbf{z}=
  \begin{pmatrix}
    z_{11}&z_{12}\\ z_{12}&z_{22}
  \end{pmatrix}>0.
\]
Equivalently, one may realize
\[
  \bm{Z}=\frac12(\bm{G}_{1}\bm{G}_{1}^{\top}+\bm{G}_{2}\bm{G}_{2}^{\top}),
\]
where \(\bm{G}_{1},\bm{G}_{2}\) are independent standard Gaussian column vectors in \(\R^2\). From the Wishart transform, for every symmetric positive semidefinite matrix \(\mathbf{B}\),
\[
  \E\bigl[e^{-\Tr(\mathbf{B}\bm{Z})}\bigr]=\det(\mathbf{I}_2+\mathbf{B})^{-1}.
\]

Let $\bm{Z}\sim\mathcal{W}_{2}(2,\tfrac{1}{2}\mathbf{I}_{2})$. Fix the symmetric positive-definite matrices
\[
  \mathbf{H}_{1}=\begin{pmatrix}1&0\\0&1\end{pmatrix},\quad \mathbf{H}_{2}=\begin{pmatrix}2&1\\1&2\end{pmatrix},\quad \mathbf{H}_{3}=\begin{pmatrix}3&0\\0&1\end{pmatrix},
\]
and define $\bm{X}=(X_{1},X_{2},X_{3})^{\top}$ by $X_{j}=\Tr(\mathbf{H}_{j}\bm{Z})$. This choice of \(\mathbf{H}_{1},\mathbf{H}_{2},\mathbf{H}_{3}\) has two useful features.
First, every non-zero non-negative linear combination of \(\mathbf H_1,\mathbf H_2,\mathbf H_3\) is positive definite. For every \(\mathbf a\in\R_+^3\setminus\{0\}\), define
\begin{equation}\label{eq:Ha123}
  \mathbf H(\mathbf a)=a_1\mathbf H_1+a_2\mathbf H_2+a_3\mathbf H_3.
\end{equation}
Then \(\mathbf H(\mathbf a)\) is positive definite, and, by linearity of the trace,
\[
  \langle \mathbf a,\bm X\rangle
  =
  \sum_{j=1}^3 a_j\Tr(\mathbf H_j\bm Z)
  =
  \Tr(\mathbf H(\mathbf a)\bm Z).
\]
The Wishart transform therefore gives, for \(u\ge0\),
\[
  \E\!\left[e^{-u\langle \mathbf a,\bm X\rangle}\right]
  =
  \E\!\left[e^{-\Tr(u\mathbf H(\mathbf a)\bm Z)}\right]
  =
  \det(\mathbf I_2+u\mathbf H(\mathbf a))^{-1}.
\]
This explicit one-dimensional projection transform will be used below to show that \(\bm X\in\MVPH{3}\). Second, the determinant polynomial
\[
  \det(\mathbf I_2+s_1\mathbf H_1+s_2\mathbf H_2+s_3\mathbf H_3)
\]
has a leading homogeneous part that does not factor into real linear forms. This is exactly the feature needed to violate the factorization criterion of Theorem~\ref{thm-invariant}. The verification of this second point is given in Theorem~\ref{thm-notmph}.

\begin{lemma}[Density of $\bm{X}$]\label{lem-density}
The random vector $\bm{X}$ has a density on $\R_{+}^{3}$ given by
\[
  f_{\bm{X}}(\mathbf{x})=\frac{1}{2\pi}\, \bigl(-7x_{1}^{2}-x_{2}^{2}-x_{3}^{2}+4x_{1}x_{2}+4x_{1}x_{3}\bigr)^{-1/2} e^{-x_{1}}
\]
on the set
\[
  \mathcal{S} = \left\{ \mathbf{x}\in\R_{+}^{3}: -7x_{1}^{2}-x_{2}^{2}-x_{3}^{2}+4x_{1}x_{2}+4x_{1}x_{3}>0 \right\},
\]
and \(f_{\bm{X}}(\mathbf{x})=0\) on \(\R_{+}^{3}\setminus\mathcal{S}\).
\end{lemma}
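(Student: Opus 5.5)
The plan is a linear change of variables. The map \(\bm{Z}\mapsto\bm{X}\) is linear and injective on the three free coordinates \((z_{11},z_{22},z_{12})\), so the density of \(\bm{X}\) follows from the Wishart density \(f_{\bm Z}\) by the Jacobian formula. From \(X_j=\Tr(\mathbf{H}_j\bm{Z})\) we read off
\[
  x_1=z_{11}+z_{22},\qquad x_2=2z_{11}+2z_{22}+2z_{12},\qquad x_3=3z_{11}+z_{22}.
\]
The coefficient matrix has rows \((1,1,0)\), \((2,2,2)\), \((3,1,0)\). Expanding along the third column gives determinant \(4\), so the map is invertible. Its inverse is
\[
  z_{11}=\tfrac12(x_3-x_1),\qquad z_{22}=\tfrac12(3x_1-x_3),\qquad z_{12}=\tfrac12(x_2-2x_1).
\]
Hence \(f_{\bm X}(\mathbf x)=\tfrac14 f_{\bm Z}(\mathbf z(\mathbf x))\) on the image of the positive-definite cone, and \(f_{\bm X}=0\) elsewhere.

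Next I express the ingredients of \(f_{\bm Z}\) in the \(\mathbf{x}\) coordinates. The trace is immediate: \(\Tr(\mathbf z)=z_{11}+z_{22}=x_1\). For the determinant, substituting the inverse formulas gives
\[
  \det\mathbf z=z_{11}z_{22}-z_{12}^2=\tfrac14\bigl[(x_3-x_1)(3x_1-x_3)-(x_2-2x_1)^2\bigr].
\]
Expanding the bracket yields \(D(\mathbf{x})=-7x_1^2-x_2^2-x_3^2+4x_1x_2+4x_1x_3\). Therefore \(\det(\mathbf z)^{-1/2}=2D(\mathbf x)^{-1/2}\), and
\[
  f_{\bm X}(\mathbf x)=\tfrac14\cdot\pi^{-1}\cdot 2D(\mathbf x)^{-1/2}e^{-x_1}=\tfrac{1}{2\pi}D(\mathbf x)^{-1/2}e^{-x_1},
\]
which is the stated formula.

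The remaining, slightly delicate step is to identify the image of the open positive-definite cone with \(\mathcal S\). A symmetric \(2\times2\) matrix \(\mathbf z\) is positive definite if and only if \(\det\mathbf z>0\) and \(\Tr\mathbf z>0\). In the new coordinates this reads \(D(\mathbf x)>0\) and \(x_1>0\).

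For one inclusion, suppose \(\mathbf z\) is positive definite. Each \(\mathbf H_j\) is positive definite, so \(x_j=\Tr(\mathbf H_j\mathbf z)>0\), and the image lies in \(\R_+^3\).

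For the other inclusion, take \(\mathbf x\in\R_+^3\) with \(D(\mathbf x)>0\). Then \(x_1>0\) automatically, because \(x_1=0\) would force \(D=-x_2^2-x_3^2\le0\). Hence the image is exactly \(\mathcal S\).

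The complement \(\R_+^3\setminus\mathcal S\) carries no mass. Its part that is reachable from positive semidefinite \(\mathbf{z}\) is the image of the singular positive semidefinite boundary, which has probability zero. So the density vanishes there, completing the proof. I expect no real obstacle beyond carefully checking the algebra in the determinant expansion and the Jacobian constant.
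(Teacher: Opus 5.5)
Your proposal is correct and follows the paper's proof: the same linear change of variables with Jacobian $1/4$, $\Tr(\mathbf z)=x_1$, and $4\det\mathbf z=D(\mathbf x)$. The only difference is in the support step. You use $\Tr(\mathbf H_j\mathbf z)>0$ and note that $x_1>0$ is automatic on $\mathcal S$, which identifies the image of the positive-definite cone with $\mathcal S$ exactly. The paper instead rewrites the condition as $(x_2-2x_1)^2+(x_3-2x_1)^2\le x_1^2$ to see that the support lies in $\R_+^3$. This is a minor variation within the same argument.
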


\begin{proof}
Write a deterministic symmetric matrix in the support of \(\bm{Z}\) as
\[
  \mathbf{z} =
  \begin{pmatrix}
    z_{11}&z_{12}\\ z_{12}&z_{22}
  \end{pmatrix}.
\]
The linear map from \(\mathbf{z}\) to \(\mathbf{x}=(x_1,x_2,x_3)^{\top}\) is
\[
  x_{1}=z_{11}+z_{22},\qquad x_{2}=2z_{11}+2z_{22}+2z_{12},\qquad x_{3}=3z_{11}+z_{22}.
\]
It is invertible, with inverse
\[
  z_{11}=\frac{x_{3}-x_{1}}{2},\qquad z_{22}=\frac{3x_{1}-x_{3}}{2},\qquad z_{12}=\frac{x_{2}-2x_{1}}{2}.
\]
The absolute value of the Jacobian determinant of the inverse map \(\mathbf{x}\mapsto(z_{11},z_{22},z_{12})\) is \(1/4\). Moreover,
\[
  \Tr(\mathbf{z})=z_{11}+z_{22}=x_1
\]
and
\[
  4\det(\mathbf{z}) = 4(z_{11}z_{22}-z_{12}^{2}) = -7x_{1}^{2}-x_{2}^{2}-x_{3}^{2}+4x_{1}x_{2}+4x_{1}x_{3}.
\]
Substitution in the change-of-variables formula gives
\[
\begin{aligned}
  f_{\bm{X}}(\mathbf{x}) &= f_{\bm{Z}}(\mathbf{z}(\mathbf{x}))\, \frac14  \\
  &= \pi^{-1} \left( \frac{-7x_{1}^{2}-x_{2}^{2}-x_{3}^{2}+4x_{1}x_{2}+4x_{1}x_{3}}{4} \right)^{-1/2} e^{-x_1}\,\frac14 \\
  &= \frac{1}{2\pi} \bigl(-7x_{1}^{2}-x_{2}^{2}-x_{3}^{2}+4x_{1}x_{2}+4x_{1}x_{3}\bigr)^{-1/2} e^{-x_1}.
\end{aligned}
\]
It remains only to describe the support in \(\mathbf{x}\)-coordinates. For a real symmetric \(2\times2\) matrix, positive semidefiniteness is equivalent to non-negative trace and non-negative determinant. Hence the closure of the support is determined by
\[
  x_1\ge0, \qquad -7x_{1}^{2}-x_{2}^{2}-x_{3}^{2}+4x_{1}x_{2}+4x_{1}x_{3}\ge0.
\]
The second inequality is equivalently
\[
  (x_{2}-2x_{1})^{2}+(x_{3}-2x_{1})^{2}\le x_{1}^{2}.
\]
This immediately implies
\[
  |x_{2}-2x_{1}|\le x_{1}, \qquad |x_{3}-2x_{1}|\le x_{1}.
\]
Since \(x_{1}\ge0\), these inequalities give
\[
  x_{1}\le x_{2}\le 3x_{1}, \qquad x_{1}\le x_{3}\le 3x_{1}.
\]
In particular \(x_{2}\ge0\) and \(x_{3}\ge0\), so the support described by the trace and determinant conditions is contained in \(\R_{+}^{3}\). The density formula itself is taken on the interior, where the determinant is strictly positive; the boundary has Lebesgue measure zero and probability zero.
\end{proof}

\begin{remark}\label{rem-support-obstruction}
The support description in Lemma~\ref{lem-density} already suggests another route to the conclusion that \(\bm X\notin\MPH{3}\). In a finite-state reward-based representation, conditional on a fixed path of the underlying continuous-time Markov chain, the reward vector is a non-negative linear combination of the finitely many reward vectors visited along that path. Taking the union over all possible visited state sets gives a finite union of polyhedral cones. Thus the closed support of an \(\MPH{3}\) distribution has a piecewise polyhedral geometry. By contrast, the closure of the support in Lemma~\ref{lem-density} is the quadratic cone
\[
  (x_2-2x_1)^2+(x_3-2x_1)^2\le x_1^2,\qquad x_1\ge0,
\]
whose section at \(x_1=1\) is a disk, not a finite union of polytopes. This geometric observation can itself be used as a route to proving non-membership in \(\MPH{3}\). In what follows, however, we use the algebraic denominator obstruction from Theorem~\ref{thm-invariant}, which can be checked directly from the Laplace transform and does not require identifying the density in closed form.
\end{remark}

\begin{theorem}[$\bm{X}\notin\MPH{3}$]\label{thm-notmph}
The distribution of $\bm{X}$ admits no finite-dimensional $\MPH{3}$ representation.
\end{theorem}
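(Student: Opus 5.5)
The plan is to compute the joint Laplace transform of \(\bm X\) explicitly, put it in minimal form, and show that the leading homogeneous part of its denominator violates the factorization criterion of Theorem~\ref{thm-invariant}.

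First, the transform. By the Wishart transform and linearity of the trace (exactly as in the projection computation preceding Lemma~\ref{lem-density}), for \(\mathbf s\in\R_+^3\) we have
\[
  L_{\bm X}(\mathbf s)=\det\bigl(\mathbf I_2+s_1\mathbf H_1+s_2\mathbf H_2+s_3\mathbf H_3\bigr)^{-1}=\frac{1}{Q(\mathbf s)}.
\]
Here
\[
  Q(\mathbf s)=(1+s_1+2s_2+3s_3)(1+s_1+2s_2+s_3)-s_2^2 .
\]
The numerator is the constant \(1\), so \(P=1\) and \(Q\) are coprime and \(1/Q\) is already in minimal form. The normalization \(Q(\mathbf 0)=1\) fixes the scalar ambiguity. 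Any rescaling of \(P\) and \(Q\) by a non-zero constant changes \(Q_{\mathrm{top}}\) only by that constant, which does not affect whether it splits into real linear factors.

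Next, the leading homogeneous part. \(Q\) has total degree \(2\), and its degree-\(2\) part is
\[
  Q_{\mathrm{top}}(\mathbf s)=\det(s_1\mathbf H_1+s_2\mathbf H_2+s_3\mathbf H_3)
  =s_1^2+3s_2^2+3s_3^2+4s_1s_2+4s_1s_3+8s_2s_3 .
\]
Now suppose \(\bm X\in\MPH{3}\). Theorem~\ref{thm-invariant} would then force \(Q_{\mathrm{top}}=c_0\,\ell_1\ell_2\) with real linear forms \(\ell_1,\ell_2\) (here \(\nu=2\) by degree count). A product of two linear forms is a quadratic form whose symmetric Gram matrix \(\tfrac12(\mathbf l_1\mathbf l_2^{\top}+\mathbf l_2\mathbf l_1^{\top})\) has rank at most \(2\). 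The Gram matrix of \(Q_{\mathrm{top}}\) is
\[
  \begin{pmatrix}1&2&2\\2&3&4\\2&4&3\end{pmatrix},
\]
whose determinant is \(1\cdot(9-16)-2(6-8)+2(8-6)=1\neq0\). So it has rank \(3\), and \(Q_{\mathrm{top}}\) cannot factor into linear forms, even over \(\mathbb C\). This contradicts Theorem~\ref{thm-invariant}, so no finite-dimensional \(\MPH{3}\) representation exists.

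The main point needing care is the minimal-form step: Theorem~\ref{thm-invariant} concerns the reduced denominator, not \(F=\det(-\mathbf T+\Delta(\mathbf K\mathbf s))\) itself. Here this is immediate because the numerator is constant. For completeness, one could also observe that \(Q\) is irreducible, since a factorization of \(Q\) would by Lemma~\ref{lem-leading} give a factorization of \(Q_{\mathrm{top}}\), which we have just excluded. The remaining work is routine algebra (the determinant expansion and the \(3\times3\) determinant), which I would double-check numerically.
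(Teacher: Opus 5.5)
Your proof is correct and follows the paper's own argument: the constant numerator makes $Q$ the minimal denominator, and its quadratic leading part fails to split into real linear forms, contradicting Theorem~\ref{thm-invariant}. The only difference is how you rule out the factorization. You note that the Gram matrix of $Q_{\mathrm{top}}$ is nonsingular (determinant $1$, hence rank $3>2$), whereas the paper compares coefficients. Your version is slightly slicker and also excludes complex factorizations.
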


\begin{proof}
From the Wishart transform,
\[
  L_{\bm{X}}(\mathbf{s}) = \det\bigl(\mathbf{I}_{2}+s_{1}\mathbf{H}_{1}+s_{2}\mathbf{H}_{2}+s_{3}\mathbf{H}_{3}\bigr)^{-1}.
\]
Since the numerator is the constant polynomial \(1\), the minimal denominator is
\[
\begin{aligned}
  Q(\mathbf{s})
  &=\det\!\begin{pmatrix}
    1+s_1+2s_2+3s_3 & s_2\\
    s_2 & 1+s_1+2s_2+s_3
  \end{pmatrix}  \\
  &=(1+s_{1}+2s_{2}+3s_{3})(1+s_{1}+2s_{2}+s_{3})-s_{2}^{2}.
\end{aligned}
\]
Expanding,
\[
\begin{aligned}
  Q(\mathbf{s}) &= 1+2s_{1}+4s_{2}+4s_{3}  \\
  &\quad +s_{1}^{2}+4s_{1}s_{2}+4s_{1}s_{3}
  +3s_{2}^{2}+8s_{2}s_{3}+3s_{3}^{2}.
\end{aligned}
\]
Hence the leading homogeneous part is
\[
  Q_{2}(\mathbf{s}) =
  s_{1}^{2}+4s_{1}s_{2}+4s_{1}s_{3}
  +3s_{2}^{2}+8s_{2}s_{3}+3s_{3}^{2}.
\]

We next show that \(Q_{2}\) cannot be written as a product of two real linear functions. Suppose, to the contrary, that such a factorization exists. Since the coefficient of \(s_1^2\) in \(Q_2\) is \(1\), any factorization of \(Q_{2}\) may be written in the form
\[
  Q_{2}(\mathbf{s}) = (s_{1}+u s_{2}+v s_{3})(s_{1}+r s_{2}+t s_{3})
\]
for some real numbers \(u,v,r,t\). Comparing coefficients gives
\[
  u+r=4,\qquad ur=3,\qquad v+t=4,\qquad vt=3,\qquad ut+vr=8.
\]
The first two equations imply that \(u\) and \(r\) are \(1\) and \(3\), in some order. Similarly, the next two equations imply that \(v\) and \(t\) are \(1\) and \(3\), in some order. Therefore the mixed coefficient \(ut+vr\) can only be
\[
1\cdot 3+1\cdot 3=6
\qquad \text{or} \qquad
1\cdot 1+3\cdot 3=10,
\]
depending on whether the two orderings are opposite or the same. Neither value is \(8\), a contradiction. Thus \(Q_2\) has no factorization into real linear functions.

By Theorem~\ref{thm-invariant}, the leading homogeneous part of the minimal denominator of any \(\MPH{3}\) distribution must factor into real linear functions with non-negative coefficients. Since \(Q_2\) does not even factor into real linear functions, \(\bm X\notin\MPH{3}\).
\end{proof}

\begin{theorem}[$\bm{X}\in\MVPH{3}$]\label{thm-mvph}
The distribution of $\bm{X}$ belongs to $\MVPH{3}$.
\end{theorem}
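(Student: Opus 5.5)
For each $\mathbf a\in\R_+^3\setminus\{0\}$ we show directly that $\langle\mathbf a,\bm X\rangle$ is a sum of two independent exponential random variables, which is phase-type. By \eqref{eq:Ha123} and the Wishart transform,
\[
  \E\bigl[e^{-u\langle\mathbf a,\bm X\rangle}\bigr]=\det\bigl(\mathbf I_2+u\mathbf H(\mathbf a)\bigr)^{-1}.
\]

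\emph{Step 1 (positivity and diagonalisation).} First, $\mathbf H(\mathbf a)$ is positive definite. Each $\mathbf H_j$ is positive definite: $\mathbf H_2$ has eigenvalues $1$ and $3$, and $\mathbf H_1$, $\mathbf H_3$ are diagonal with positive entries. A non-negative combination with at least one positive weight is therefore positive definite. Since $\mathbf H(\mathbf a)$ is real symmetric, we may write $\mathbf H(\mathbf a)=\mathbf O\,\diag(\mu_1,\mu_2)\,\mathbf O^{\top}$ with $\mathbf O$ orthogonal and $\mu_1,\mu_2>0$, where $\mu_1,\mu_2$ depend on $\mathbf a$.

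\emph{Step 2 (identify the projected law).} It follows that
\[
  \det\bigl(\mathbf I_2+u\mathbf H(\mathbf a)\bigr)^{-1}=\frac{1}{(1+\mu_1u)(1+\mu_2u)}=\frac{\mu_1^{-1}}{\mu_1^{-1}+u}\cdot\frac{\mu_2^{-1}}{\mu_2^{-1}+u}.
\]
This is the Laplace transform of $E_1+E_2$, where $E_1$ and $E_2$ are independent exponentials with rates $\mu_1^{-1}$ and $\mu_2^{-1}$. One can also see this probabilistically. Since $\bm Z=\tfrac12\sum_k\bm G_k\bm G_k^\top$ and $\mathbf O^\top\bm G_k$ is again standard Gaussian,
\[
  \Tr\bigl(\mathbf H(\mathbf a)\bm Z\bigr)=\tfrac12\sum_{i=1}^2\mu_i\sum_{k=1}^2 W_{ik}^2
\]
with i.i.d.\ standard normal $W_{ik}$, and $\tfrac12(W_{i1}^2+W_{i2}^2)$ is $\mathrm{Exp}(1)$. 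Because Laplace transforms on $u\ge0$ determine the law on $\R_+$, $\langle\mathbf a,\bm X\rangle\overset{d}{=}E_1+E_2$.

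\emph{Step 3 (explicit PH representation).} A generalized Erlang distribution has the PH representation
\[
  \balpha=(1,0),\qquad \mathbf T=\begin{pmatrix}-\mu_1^{-1}&\mu_1^{-1}\\0&-\mu_2^{-1}\end{pmatrix},\qquad \mathbf t=-\mathbf T\mathbf 1=(0,\mu_2^{-1})^{\top}.
\]
This is a valid subgenerator: the diagonal is negative, the off-diagonal is non-negative, the row sums are non-positive, and the chain is transient. The case $\mu_1=\mu_2$ is the Erlang-2 case and needs no separate treatment. Hence every positive projection is in $\PH$, and $\bm X\in\MVPH{3}$ by Definition~\ref{def-bn}.

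There is no real obstacle here. The only points that need care are the strict positivity of $\mathbf H(\mathbf a)$, which guarantees positive rates, and the uniqueness of Laplace transforms, which turns the transform identity into equality in law.
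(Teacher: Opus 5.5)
Your proof is correct and takes essentially the same route as the paper: positive definiteness of $\mathbf H(\mathbf a)$, factorization of $\det(\mathbf I_2+u\mathbf H(\mathbf a))^{-1}$ through its two positive eigenvalues, and identification of the projection as a sum of two independent exponentials. Your Gaussian-representation check and explicit generalized Erlang representation are valid extras that the paper leaves implicit.
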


\begin{proof}
The Wishart transform displayed above gives, for all $\mathbf{s}$ in a neighbourhood of the non-negative orthant,
\begin{equation}\label{eq-ltx}
  L_{\bm{X}}(\mathbf{s}) =
  \E\bigl[e^{-\Tr((s_{1}\mathbf{H}_{1}+s_{2}\mathbf{H}_{2}+s_{3}\mathbf{H}_{3})\bm{Z})}\bigr]
  =
  \det\bigl(\mathbf{I}_{2}+s_{1}\mathbf{H}_{1}+s_{2}\mathbf{H}_{2}+s_{3}\mathbf{H}_{3}\bigr)^{-1}.
\end{equation}

To verify membership in \(\MVPH{3}\), we must check every non-negative one-dimensional projection. Fix
\[
  \mathbf{a}=(a_{1},a_{2},a_{3})^{\top}\in\R_{+}^{3}\setminus\{0\}.
\]
Using the notation from~\eqref{eq:Ha123}, linearity of the trace gives
\[
  \langle \mathbf{a},\bm{X}\rangle
  =
  \sum_{j=1}^{3}a_j\Tr(\mathbf{H}_{j}\bm{Z})
  =
  \Tr\!\left(\mathbf{H}(\mathbf{a})\bm{Z}\right).
\]
Each \(\mathbf{H}_{j}\) is positive definite, and the coefficients \(a_j\) are non-negative with at least one \(a_j>0\). Hence \(\mathbf{H}(\mathbf{a})\) is positive definite. Therefore its two eigenvalues, denoted by $\lambda_{1}(\mathbf{a})$, $\lambda_{2}(\mathbf{a})$, are strictly positive.

Evaluating~\eqref{eq-ltx} along the ray $\mathbf{s}=u\mathbf{a}$ gives
\[
  L_{\langle \mathbf{a},\bm{X}\rangle}(u)
  =
  L_{\bm{X}}(u\mathbf{a})
  =
  \det\bigl(\mathbf{I}_{2}+u\mathbf{H}(\mathbf{a})\bigr)^{-1}
  =
  \bigl(1+u\lambda_{1}(\mathbf{a})\bigr)^{-1}
  \bigl(1+u\lambda_{2}(\mathbf{a})\bigr)^{-1}.
\]

This is the Laplace transform of the sum of two independent exponential random variables with scale parameters \(\lambda_{1}(\mathbf{a})\) and \(\lambda_{2}(\mathbf{a})\), equivalently rates \(1/\lambda_{1}(\mathbf a)\) and \(1/\lambda_{2}(\mathbf a)\). Such a distribution is phase-type. Since $\langle \mathbf{a},\bm{X}\rangle\in\PH$ for every $\mathbf{a}\in\R_{+}^{3}\setminus\{0\}$, the definition of the projection-based class gives $\bm{X}\in\MVPH{3}$.
\end{proof}

\begin{corollary}\label{cor-alln}
For every $n\ge 3$, $\;\MPH{n}\subsetneq\MVPH{n}$.
\end{corollary}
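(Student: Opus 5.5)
The plan is to extend the trivariate example by appending independent exponential coordinates, as announced in the introduction. Fix $n\ge3$ and let $\bm X=(X_1,X_2,X_3)^\top$ be the Wishart trace vector constructed above. Let $E_4,\ldots,E_n$ be i.i.d.\ standard exponential variables independent of $\bm X$, and set $\bm X^{(n)}=(X_1,X_2,X_3,E_4,\ldots,E_n)^\top$. By independence its joint transform is
\[
  L_{\bm X^{(n)}}(\mathbf s)=Q(s_1,s_2,s_3)^{-1}\prod_{j=4}^{n}(1+s_j)^{-1},
\]
where $Q$ is the determinant polynomial from the proof of Theorem~\ref{thm-notmph}. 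Since $\MPH{n}\subseteq\MVPH{n}$ by Remark~\ref{rem-trivial}, it suffices to show that $\bm X^{(n)}\in\MVPH{n}$ and $\bm X^{(n)}\notin\MPH{n}$.

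For membership, fix $\mathbf a\in\R_+^n\setminus\{0\}$ and write $\langle\mathbf a,\bm X^{(n)}\rangle=\langle\mathbf a_{1:3},\bm X\rangle+\sum_{j\ge4}a_jE_j$. The first summand is either identically zero or PH by Theorem~\ref{thm-mvph}. Each $a_jE_j$ with $a_j>0$ is exponential, and summands with $a_j=0$ vanish. Independent sums of PH variables are PH by closure under convolution, and at least one summand is non-degenerate. Hence every positive projection is PH, so $\bm X^{(n)}\in\MVPH{n}$.

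For non-membership, the numerator of the transform is $1$, so the minimal denominator is $Q\cdot\prod_{j\ge4}(1+s_j)$. By Lemma~\ref{lem-leading}, its leading homogeneous part is $Q_2(s_1,s_2,s_3)\prod_{j\ge4}s_j$. If $\bm X^{(n)}\in\MPH{n}$, Theorem~\ref{thm-invariant} would force this to equal $c_0$ times a product of real linear forms. The main obstacle is to pass from that factorization to a factorization of $Q_2$ itself. The tool is unique factorization in $\R[s_1,\ldots,s_n]$: linear forms are irreducible, so each irreducible factor of $Q_2\prod s_j$ would be linear. Now $Q_2$ depends only on $s_1,s_2,s_3$ and, as shown in Theorem~\ref{thm-notmph}, has no factorization into real linear forms. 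Being a quadratic, it is therefore irreducible over $\R$ even as an element of $\R[s_1,\ldots,s_n]$, since any factorization there would involve only $s_1,s_2,s_3$ by degree considerations. So $Q_2$ is an irreducible non-linear factor, which is a contradiction.

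Alternatively, one can avoid UFD language by setting $s_4=\cdots=s_n=1$, which keeps each linear form linear or constant and recovers a linear factorization of a nonzero multiple of $Q_2$. That again contradicts Theorem~\ref{thm-notmph}. Either way $\bm X^{(n)}\notin\MPH{n}$, which gives the strict inclusion.
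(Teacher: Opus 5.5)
Your proof is correct. The construction and the $\MVPH{n}$ membership argument are the same as the paper's; the difference is in how you show $\bm X^{(n)}\notin\MPH{n}$.

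The paper does not use the factorization criterion again at this point. It observes that if $\bm Y$ had an $\MPH{n}$ representation $(\balpha,\mathbf T,\mathbf K)$, then setting $s_4=\cdots=s_n=0$ gives $\Delta(\mathbf K\mathbf s)=\Delta(\mathbf K'\mathbf s')$, where $\mathbf K'\ge 0$ consists of the first three columns of $\mathbf K$. This is an $\MPH{3}$ representation of the marginal $(Y_1,Y_2,Y_3)=\bm X$, contradicting Theorem~\ref{thm-notmph}. That argument uses only closure of $\MPH{}$ under taking sub-vectors. It needs neither the joint transform of $\bm Y$ nor independence of the appended coordinates: any $\bm Y$ whose first three coordinates are distributed as $\bm X$ would do.

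Your route applies Theorem~\ref{thm-invariant} directly in $n$ variables. It needs the minimal denominator $Q\prod_{j\ge4}(1+s_j)$, which relies on the product structure coming from independence, plus a unique-factorization argument to isolate the irreducible quadratic $Q_2$. In exchange, it shows that the leading-form obstruction itself persists in every dimension, rather than being inherited through marginalization.

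One small point in your alternative argument. Substituting $s_4=\cdots=s_n=1$ turns each $\ell_i$ into the affine function $\lambda_{i1}s_1+\lambda_{i2}s_2+\lambda_{i3}s_3+\sum_{j\ge4}\lambda_{ij}$, which is not a linear form. You therefore need to take leading homogeneous parts once more (Lemma~\ref{lem-leading}) before invoking Theorem~\ref{thm-notmph}. Your main UFD argument needs no such fix.
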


\begin{proof}
For $n=3$ the strict inclusion is immediate from Theorems~\ref{thm-notmph} and~\ref{thm-mvph}. For $n>3$, let $\bm{X}\in\MVPH{3}\setminus\MPH{3}$ be as above and define $\bm{Y}=(X_{1},X_{2},X_{3},E_{4},\ldots,E_{n})^{\top}$, where $E_{4},\ldots,E_{n}\overset{\mathrm{i.i.d.}}{\sim}\mathrm{Exp}(1)$ are independent of $\bm{X}$. For any $\mathbf{a}\in\R_{+}^{n}\setminus\{0\}$, the projection $\langle \mathbf{a},\bm{Y}\rangle$ is the sum of $\langle(a_{1},a_{2},a_{3})^{\top},\bm{X}\rangle\in\PH$ and a non-negative linear combination of independent exponentials, hence is phase-type. Here coefficients equal to zero simply give degenerate zero summands, and positive coefficients rescale exponentials; the PH class is closed under convolution of independent PH variables. Thus $\bm{Y}\in\MVPH{n}$.

If $\bm{Y}$ had an $\MPH{n}$ representation with reward matrix $\mathbf{K}$, then setting $s_4=\cdots=s_n=0$ in the joint transform would keep only the first three reward columns. This would give an $\MPH{3}$ representation for $(Y_1,Y_2,Y_3)=\bm{X}$, contradicting Theorem~\ref{thm-notmph}. Hence $\bm{Y}\notin\MPH{n}$.
\end{proof}

The preceding argument does not resolve the bivariate case. The reason is that the obstruction used above is intrinsically three-dimensional. In the trivariate example, non-membership in \(\MPH{3}\) is detected by an irreducible quadratic leading form, whereas the corresponding leading-form obstruction has no comparable strength in two variables. Thus the invariant developed here cannot distinguish \(\MPH{2}\) from \(\MVPH{2}\). Whether the two classes coincide in the bivariate case remains open.

\section{Concluding remarks}
\label{sec-remarks}

This paper has settled two questions on the structure of multivariate matrix-exponential and phase-type classes. For the matrix-exponential pair, the equality $\MVME{n}=\MME{n}$ shows that Kulkarni's algebraic resolvent, once freed from positivity, covers the full class of proper rational multivariate Laplace transforms. The resulting \(\MME{n}\) representations may moreover be chosen with non-negative reward matrix, normalized terminal column \(\mathbf t=-\mathbf T\mathbf 1\), and \(0\le\balpha\mathbf 1\le1\), so the transition matrix is the only ingredient that falls outside the Markovian constraints. The representations therefore retain much of the formal structure of \(\MPH{n}\) and may carry probabilistic interpretations not yet identified in the purely algebraic setting. For the phase-type pair, the M-matrix structure of any transient subgenerator is restrictive. It forces a factorization condition on the leading homogeneous part of the denominator that the Wishart trace construction violates for $n\ge 3$, establishing strict separation throughout that range.

On the PH side, the Markovian constraints impose a condition visible in the denominator algebra of the joint transform. This gives a concrete way to test candidate multivariate PH models and suggests that further algebraic criteria may be found by studying the denominators arising from reward-based representations.

The Wishart example shows that trace functionals of random matrices can produce natural elements of the projection-based class \(\MVPH{}\) without arising from a finite-state reward accumulation mechanism. This suggests that matrix-variate distributions may provide a useful source of examples and counterexamples for multivariate PH theory, particularly when their determinant-form transforms interact with positivity of one-dimensional projections. The Wishart trace density contains a genuine square-root factor despite having a rational Laplace transform. This behavior is absent from the standard univariate PH/ME density representation, where matrix exponentials give exponential-polynomial terms rather than algebraic square-root singularities. What density shapes, singularities, and support geometries can arise in \(\MVPH{}\) and \(\MVME{}\) more broadly remains unclear; the projection-based and reward-based classes may differ further along these lines.

\bibliographystyle{plain}
\bibliography{MVME_MVPH_references}

\end{document}